\documentclass[a4paper,12pt]{amsart}
\usepackage[T1]{fontenc}    
\usepackage[utf8]{inputenc}
\usepackage{enumerate}
\usepackage{amssymb}
\usepackage{fullpage}
\usepackage{color}
\usepackage{mathrsfs}
\def\RR{\mathbb R}

\def\NN{\mathbb N}
\def\uu{\mathcal U}
\def\vv{\mathcal V}
\def\ss{\mathcal S}
\def\hh{\mathcal H}
\def\dd{\mathcal D}
\newcommand{\set}[1]{\left\lbrace #1\right\rbrace}
\providecommand{\abs}[1]{\left\lvert#1\right\rvert}

\newcommand{\remove}[1]{ }
\newcommand{\qtq}[1]{\quad\text{#1}\quad}

\newtheorem{theorem}{Theorem}[section]
\newtheorem{lemma}[theorem]{Lemma}

\newtheorem*{t11}{Theorem 1.1}
\newtheorem*{t12}{Theorem 1.2}
\newtheorem*{t14}{Theorem 1.4}
\newtheorem*{t15}{Theorem 1.5}
\newtheorem*{t16}{Theorem 1.6}

\theoremstyle{definition}
\newtheorem{definition}[theorem]{Definition}

\theoremstyle{remark}
\newtheorem{remark}[theorem]{Remark}
\remove{}
\newtheorem{example}[theorem]{Example}
\newtheorem{examples}[theorem]{Examples}

\numberwithin{equation}{section}

\begin{document}
\title[]{Difference of Cantor sets and frequencies in  Thue--Morse type sequences}
\author{Yi Cai*}
\address{School of Mathematical Sciences\\
East China Normal University\\
Shanghai 200241\\
People’s Republic of China}
\email{52170601013@stu.ecnu.edu.cn}

\author{Vilmos Komornik}
\address{College of Mathematics and Computational Science\\
Shenzhen University,
Shenzhen 518060\\
People’s Republic of China,
and
Département de mathématique\\
Université de Strasbourg,
7 rue René Descartes
67084 Strasbourg Cedex\\
France}
\email{vilmos.komornik@math.unistra.fr}

\subjclass[2010]{Primary 28A80, Secondary 11K55, 37B10}
\keywords{Intersection of Cantor sets, Hausdorff dimension,  unique $\beta$-expansion, self-similar sets, Thue--Morse sequence, frequency of words.}
\date{Submitted on December 22, 2019}
\thanks{*Corresponding author; email address: 52170601013@stu.ecnu.edu.cn.}
\thanks{This work has been done during the first authors's stay in the
Department of Mathematics of the University of Strasbourg and was supported by NSFC No. 11671147 and Science and Technology Commission of Shanghai Municipality (STCSM) No. 18dz2271000.
He thanks the members of the department for their hospitality.
The work was of the second author was partially supported by the grant No. 11871348 of the National Natural Science Foundation of China (NSFC)}

\begin{abstract}
In a recent paper, Baker and Kong have studied the Hausdorff dimension of the intersection  of Cantor sets with their translations.
We extend their results to  more general Cantor sets.
The proofs rely on the frequencies of digits in unique expansions in non-integer bases.
In relation with this, we introduce a practical method to determine the frequency of any given finite block in Thue--Morse type sequences.
\end{abstract}
\maketitle

\section{Introduction}\label{s1}

Given a real number $q>1$ and a finite set $\Omega$ of real numbers, a sequence $(c_i)_{i=1}^{\infty} \in \Omega^{\NN}$ is called an \emph{expansion} of $x$ in \emph{base} $q$ over the \emph{alphabet} $\Omega$ if
\begin{equation*}
\sum_{i=1}^{\infty}\frac{c_i}{q^i}=x.
\end{equation*}
If $\Omega_m=\set{0,1,\ldots,m}$ for some positive integer $m$, then
\begin{equation}\label{11}
\Gamma_{q,m}:=\set{\sum_{i=1}^{\infty}\frac{c_i}{q^i}\ :\ c_i\in\set{0,1,\ldots,m}}
\end{equation}
is equal to the interval $[0,\frac{m}{q-1}]$ if $q\le m+1$, and it is a Cantor set in this interval if $q>m+1$ (see \cite{KR}).

In their recent paper \cite{BD}, Baker and Kong studied the size of the intersection sets $\Gamma_{q,1}\cap(\Gamma_{q,1}+t)$ where $t$ is a given real number.
The purpose of this paper is to extend their results to the more general case of the sets $\Gamma_{q,m_1}\cap(\Gamma_{q,m_2}+t)$, where $m_1$ and $m_2$ are arbitrary positive integers.
The set $\Gamma_{q,m_1}\cap(\Gamma_{q,m_2}+t)$ is nonempty if and only if $t\in \Gamma_{q,m_1}-\Gamma_{q,m_2}$, or equivalently, if $t$ has an expansion in base $q$ over the alphabet
\begin{equation}\label{12}
\Omega_{-m_2,m_1}:=\set{-m_2,\ldots,-1,0,1,\ldots,m_1}.
\end{equation}
Since
\begin{equation}\label{13}
\sum_{i=1}^{\infty}\frac{t_i}{q^i}=t
\Longleftrightarrow
\sum_{i=1}^{\infty}\frac{t_i+m_2}{q^i}=t+\frac{m_2}{q-1},
\end{equation}
the expansions over $\Omega_{-m_2,m_1}$ are closely related to the expansions over the more familiar alphabet $\Omega_{m_1+m_2}:=\set{0,\ldots,m_1+m_2}$.

We will consider the above question in the case where $t$ has a unique expansion in base $q$ over the alphabet $\Omega_{-m_2,m_1}$.
We let $\uu_{m_1,m_2}(q)$  denote the set of these real numbers $t$, and
$\uu'_{m_1,m_2}(q)\subset\set{-m_2,\ldots,m_1}^{\NN}$
denote the set of corresponding expansions $(t_i)$.
Furthermore, we set
\begin{equation}\label{14}
\dd_{m_1,m_2}(q):=\set{\dim_H(\Gamma_{q,m_1} \cap (\Gamma_{q,m_2}+t)): t \in \uu_{m_1,m_2}(q)}
\end{equation}
for brevity.

We will prove the following theorem.
We assume without loss of generality that $m_1\ge m_2$.

\begin{theorem}\label{t11}
Let $q>1$.
Then:
\begin{enumerate}[\upshape (i)]
\item If $q-1\le m_2\le m_1$, then
\begin{equation*}
\dd_{m_1,m_2}(q)=\set{0,1}.
\end{equation*}
\item If $m_2<q-1\le m_1$, then
\begin{equation*}
\dd_{m_1,m_2}(q)=\set{0,\frac{\log (m_2+1)}{\log q}}.
\end{equation*}
\item If $m_2<m_1<q-1$, then
$\dd_{m_1,m_2}(q)$ contains the point $\frac{\log (m_2+1)}{\log q}$.\end{enumerate}
\end{theorem}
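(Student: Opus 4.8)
The plan is to prove this (sharp one-point) statement by exhibiting a single admissible parameter, namely $t=0$, for which the intersection has exactly the claimed dimension. Since we are in the regime $m_2<m_1<q-1$, we have $q>m_1+1>m_2+1$, so by the fact recalled after \eqref{11} both $\Gamma_{q,m_1}$ and $\Gamma_{q,m_2}$ are Cantor sets. The key elementary observation is that the alphabet inclusion $\set{0,\ldots,m_2}\subset\set{0,\ldots,m_1}$ forces $\Gamma_{q,m_2}\subseteq\Gamma_{q,m_1}$, so that
\begin{equation*}
\Gamma_{q,m_1}\cap(\Gamma_{q,m_2}+0)=\Gamma_{q,m_2}.
\end{equation*}

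First I would check that $0\in\uu_{m_1,m_2}(q)$, i.e.\ that $0$ admits a unique expansion over the alphabet $\Omega_{-m_2,m_1}$. The obvious expansion is the all-zero sequence; to see it is the only one, suppose $\sum_{i\ge1}t_i/q^i=0$ with $t_i\in\set{-m_2,\ldots,m_1}$. If $t_1\ge1$, then bounding the tail below by $-m_2/(q(q-1))$ gives a sum at least $\frac1q\bigl(1-\frac{m_2}{q-1}\bigr)>0$ because $m_2<q-1$; if $t_1\le-1$, bounding the tail above by $m_1/(q(q-1))$ gives a sum at most $\frac1q\bigl(\frac{m_1}{q-1}-1\bigr)<0$ because $m_1<q-1$. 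Hence $t_1=0$, and applying the same argument to the shifted tails yields $t_i=0$ for all $i$ by induction. Thus $0$ lies in $\uu_{m_1,m_2}(q)$, so $\dim_H\Gamma_{q,m_2}$ is one of the values making up $\dd_{m_1,m_2}(q)$.

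It then remains to evaluate $\dim_H\Gamma_{q,m_2}$. Here $\Gamma_{q,m_2}$ is the self-similar set attached to the $m_2+1$ similarities $x\mapsto(x+j)/q$, $j=0,\ldots,m_2$, each of ratio $1/q$; the images of the convex hull $[0,\frac{m_2}{q-1}]$ are pairwise disjoint precisely when $m_2<q-1$, so in our regime the strong separation condition holds and the classical similarity-dimension formula gives $\dim_H\Gamma_{q,m_2}=\frac{\log(m_2+1)}{\log q}$. Combining the three steps yields $\frac{\log(m_2+1)}{\log q}\in\dd_{m_1,m_2}(q)$, as claimed.

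Finally, I would record that this value is in fact the maximum of $\dd_{m_1,m_2}(q)$: for every admissible $t$ the intersection is contained in the translate $\Gamma_{q,m_2}+t$, whose dimension is $\frac{\log(m_2+1)}{\log q}$, so no $t$ can exceed it. The only step requiring any genuine care is the uniqueness verification for $t=0$; everything else is either the trivial alphabet inclusion or the standard self-similar dimension computation, so I do not anticipate a serious obstacle here, in contrast to parts (i) and (ii), where one must instead pin down the full two-element set.
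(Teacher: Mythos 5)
Your proposal proves only part (iii) of the theorem. Parts (i) and (ii) assert exact equalities --- $\dd_{m_1,m_2}(q)=\set{0,1}$ when $q-1\le m_2\le m_1$, and $\dd_{m_1,m_2}(q)=\set{0,\frac{\log(m_2+1)}{\log q}}$ when $m_2<q-1\le m_1$ --- and you leave them entirely unaddressed; your closing sentence defers them explicitly. Since the statement is the full three-part theorem, this is a genuine gap. Note also that your $t=0$ device cannot be transported to those regimes: your uniqueness argument uses \emph{both} inequalities $m_2<q-1$ and $m_1<q-1$, and in cases (i) and (ii) one has $m_1\ge q-1$, so $0$ then has expansions over $\Omega_{-m_2,m_1}$ other than $0^\infty$ (follow the digit $-1$ by an expansion of $1$, which exists because $1\le m_1/(q-1)$ and $q\le m_1+m_2+1$); hence $0\notin\uu_{m_1,m_2}(q)$ there. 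The paper's treatment of (i) and (ii) is instead structural: for $q\le m_2+1$ both $\Gamma_{q,m_1}$ and $\Gamma_{q,m_2}$ are intervals, so every intersection is an interval and has dimension $0$ or $1$; for $m_2+1<q\le m_1+1$ the set $\Gamma_{q,m_2}$ is a homogeneous Cantor set of dimension $\log(m_2+1)/\log q$ while $\Gamma_{q,m_1}$ is an interval, so the intersection has dimension $0$ or $\log(m_2+1)/\log q$. Any complete write-up must include arguments of this kind (plus, for the claimed equalities, the attainment of both values).

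For part (iii) itself your argument is correct, and it is genuinely different from --- and more elementary than --- the paper's. The paper chooses as $t$ the number whose expansion is the Thue--Morse-type sequence \eqref{51}; to know that this sequence lies in $\uu'_{m_1,m_2}(q)$ it needs Lemma \ref{l51} (the Komornik--Loreti constant of the alphabet $\set{-m_2,\ldots,m_1}$ satisfies $m_2+1<q_{KL}<m_1+1$, whence $q>m_1+1>q_{KL}$), and then it computes the dimension via the Moran-set formula of Lemma \ref{l44} with $n_j\equiv m_2+1$. You instead take $t=0$, observe $\Gamma_{q,m_1}\cap\Gamma_{q,m_2}=\Gamma_{q,m_2}$, verify uniqueness of the expansion $0^\infty$ by a direct two-sided tail estimate (valid precisely because $m_2\le m_1<q-1$), and invoke the similarity-dimension formula under strong separation; alternatively Lemma \ref{l44} applied to $(t_j)=0^\infty$ gives the same value immediately. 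Your route bypasses all of the unique-expansion machinery of Section \ref{s3}; what the paper's choice buys is a nonzero, combinatorially structured parameter of the same type reused for Theorem \ref{t12}, but for the bare containment claimed in (iii) your argument suffices. Your final remark that $\log(m_2+1)/\log q$ is the maximum of $\dd_{m_1,m_2}(q)$, by monotonicity of Hausdorff dimension under the inclusion into $\Gamma_{q,m_2}+t$, is also correct, though not needed.
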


In order to state our next theorem we recall some results on unique expansions in non-integer bases (see Section \ref{s3} for more details).
We let $\uu$ denote the set of bases in which $1$ has a unique expansion, and $\vv$ denote the set of bases in which $1$ has a unique doubly infinite expansion over the alphabet $\set{0,1,\ldots,m_1+m_2}$.
Note that $\vv$ is a closed set, and $\uu\subsetneq\overline{\uu}\subsetneq\vv$ where $\overline{\uu}$ denotes the topological closure of $\uu$; see \cite{KL-2007,K-2012,VKL}.
Furthermore, $\uu$ has a smallest element $q_{KL}$ known  as the \emph{Komornik--Loreti constant}; see \cite{KL,AC}.
The set $\vv\cap(1,q_{KL})$ is formed by an increasing sequence $q_1<q_2<\cdots$, where $q_1$ is the smallest element of $\vv$: the so called \emph{generalized Golden Ratio}, and $q_k$ converges to $q_{KL}$ as $k\to\infty$; see \cite{B,VKL}.
We also recall from \cite{VKL} that $1$ has a finite greedy expansion of the form $\omega_k0^{\infty}$ in base $q_k$ with a suitable finite word  $\omega_k$ having a nonzero last digit; see Section \ref{s3} for their explicit expression.

We emphasize that the bases $q_{KL}$ and $q_k$ depend on the alphabet $\set{0,1,\ldots,m_1+m_2}$.

\begin{theorem}\label{t12}
Assume that  $m_1=m_2=m$.
Then:
\begin{enumerate}[\upshape (i)]
\item If $q_{KL}<q<\infty$, then $\dd_{m,m}(q)$ contains an interval.
\item If $q=q_{KL}$,  then $\dd_{m,m}(q)$ is formed by the numbers
\begin{align*}
&0,\quad \frac{\log (m+1)}{\log q},
\quad \frac{\log [m^2(m+1)]}{3\log q},\intertext{and}
&\frac{\log m}{\log q}-\frac{\log \frac{m+1}{m}}{\log q}\sum_{i=1}^j\left(-\frac{1}{2}\right)^i
\qtq{for}1\le j<\infty.
\end{align*}
\item If  $q_k<q\le q_{k+1}$ for some $k\ge 1$, then $\dd_{m,m}(q)$ is formed by the numbers
\begin{equation*}
0,\quad \frac{\log (m+1)}{\log q},\qtq{and}
\frac{\log m}{\log q}
-\frac{\log \frac{m+1}{m}}{\log q}\sum_{i=1}^j\left(-\frac{1}{2}\right)^i
\qtq{for $1\le j<k$ if $k\ge 2$.}
\end{equation*}
More precisely, if $(m+t_i)$ ends with with $(\omega_j\overline{\omega_j})^{\infty}$ for some $j\ge1$, then
\begin{equation*}
\dim_H(\Gamma_{q,m} \cap (\Gamma_{q,m}+t))
=\frac{\log m}{\log q}
-\frac{\log \frac{m+1}{m}}{\log q}\sum_{i=1}^j\left(-\frac{1}{2}\right)^i.
\end{equation*}
\end{enumerate}
\end{theorem}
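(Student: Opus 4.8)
The plan is to turn each set $\Gamma_{q,m}\cap(\Gamma_{q,m}+t)$ into a homogeneous Moran set and to reduce its dimension to a digit frequency. Fix $t\in\uu_{m,m}(q)$ with its unique expansion $(t_i)\in\set{-m,\ldots,m}^{\NN}$. If $x=\sum c_iq^{-i}$ with $c_i\in\set{0,\ldots,m}$ lies in $\Gamma_{q,m}+t$, then $x-t$ has a representation $\sum d_iq^{-i}$ with $d_i\in\set{0,\ldots,m}$; since then $\sum(c_i-d_i)q^{-i}=t$ with $c_i-d_i\in\set{-m,\ldots,m}$ and $t$ has a \emph{unique} expansion, we must have $d_i=c_i-t_i$. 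Hence the intersection is exactly the set of $\sum c_iq^{-i}$ whose digits satisfy $c_i\in\set{0,\ldots,m}\cap\set{t_i,\ldots,t_i+m}$, a set of $m+1-\abs{t_i}$ consecutive integers. In all three cases one has $q>q_1>m+1$ (the generalized Golden Ratio $q_1$ exceeds $m+1$, as its explicit value in Section~\ref{s3} shows), so $\Gamma_{q,m}$ is a genuine Cantor set and the cylinders are pairwise disjoint. The intersection is therefore a Moran set with ratio $1/q$ and $m+1-\abs{t_i}$ branches at level $i$, and the standard Moran dimension formula gives
\begin{equation*}
\dim_H\bigl(\Gamma_{q,m}\cap(\Gamma_{q,m}+t)\bigr)=\frac{1}{\log q}\liminf_{n\to\infty}\frac1n\sum_{i=1}^{n}\log\bigl(m+1-\abs{t_i}\bigr).
\end{equation*}
Everything now depends only on the asymptotic frequencies of the values taken by $\abs{t_i}$.

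Next I would feed in the symbolic description of $\uu'_{m,m}(q)$. For $q\le q_{KL}$ the lexicographic (Parry-type) conditions defining unique expansions force the shifted sequence $(m+t_i)$ to stay inside the three central digits $\set{m-1,m,m+1}$, apart from tails that are absorbed at an endpoint $0$ or $2m$. The admissible tails are then exactly: (a) eventually constant equal to $0$ or $2m$, giving a single branch and dimension $0$; (b) the constant central tail $m^{\infty}$, realized by $t=0$, for which the intersection is all of $\Gamma_{q,m}$ and the dimension is $\log(m+1)/\log q$; (c) tails of the form $(\omega_j\overline{\omega_j})^{\infty}$; and, only at $q=q_{KL}$, the limiting Thue--Morse type fixed point obtained as $j\to\infty$. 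For $q_k<q\le q_{k+1}$ the same list applies but with (c) restricted to those $j$ for which $(\omega_j\overline{\omega_j})^{\infty}$ is univoque, namely $q>q_{j+1}$, i.e.\ $1\le j<k$. Proving that this list is \emph{complete} --- that no unique expansion produces any other value of the liminf above --- is the crux of parts (ii) and (iii).

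For a tail with $\abs{t_i}\in\set{0,1}$ and well-defined digit frequencies the liminf is a true limit and equals $\tfrac{1}{\log q}\bigl(f_0\log(m+1)+(1-f_0)\log m\bigr)$, where $f_0$ is the frequency of the central digit $m+t_i=m$ (equivalently $t_i=0$). It remains to compute $f_0$ along $(\omega_j\overline{\omega_j})^{\infty}$. The reflection $a\mapsto 2m-a$ defining $\overline{\omega_j}$ fixes the centre $m$, so $m$ occurs in $\overline{\omega_j}$ exactly where it occurs in $\omega_j$; hence $f_0$ equals the frequency $\phi_j$ of the central digit inside $\omega_j$ alone. The recursive doubling structure of the words $\omega_j$ in Section~\ref{s3} --- $\omega_{j+1}$ is $\omega_j\overline{\omega_j}$ with its last digit changed by one, the direction alternating with $j$ --- yields $\abs{\omega_j}=2^{j}$ and a count $c_j$ of central digits obeying $c_{j+1}=2c_j+(-1)^{j}$, whence
\begin{equation*}
\phi_j=\frac{c_j}{2^{j}}=\frac13\Bigl(1-\bigl(-\tfrac12\bigr)^{j}\Bigr).
\end{equation*}
Substituting $f_0=\phi_j$ into the dimension formula reproduces exactly
\begin{equation*}
\frac{\log m}{\log q}-\frac{\log\frac{m+1}{m}}{\log q}\sum_{i=1}^{j}\Bigl(-\frac12\Bigr)^{i},
\end{equation*}
and letting $j\to\infty$ gives $f_0=\tfrac13$ and the Thue--Morse value $\log[m^2(m+1)]/(3\log q)$. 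Together with the dimensions $0$ and $\log(m+1)/\log q$ this yields the complete lists in (ii) and (iii).

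Finally, for $q>q_{KL}$ the univoque subshift has positive topological entropy, so it contains two blocks that can be concatenated freely while keeping the lexicographic uniqueness conditions; choosing these blocks to have different central-digit frequencies and splicing long runs of them in a variable proportion produces unique expansions whose frequency $f_0$ --- and hence whose dimension --- ranges continuously over an interval, so $\dd_{m,m}(q)$ contains an interval, proving (i). I expect the main obstacle to be the completeness statement of the second paragraph: establishing the \emph{upper} bound, that every unique expansion in base $q\le q_{KL}$ has a liminf-frequency equal to one of the listed values, requires a full description of the admissible trajectories in $\set{m-1,m,m+1}$ and is where the combinatorics of Thue--Morse type sequences is genuinely needed; the accompanying frequency identity $\phi_j=\tfrac13(1-(-\tfrac12)^{j})$ is the concrete instance of the frequency method for Thue--Morse type sequences announced in the abstract.
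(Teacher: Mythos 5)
Your skeleton is the same as the paper's: first reduce $\dim_H(\Gamma_{q,m}\cap(\Gamma_{q,m}+t))$ for $t\in\uu_{m,m}(q)$ to the frequency formula with $n_j=m+1-\abs{t_j}$ branches at level $j$ (this is exactly Lemma \ref{l41} plus Lemma \ref{l44}, proved via the Moran-set result, Theorem \ref{t43}), and then translate dimensions into central-digit frequencies of the admissible tails. Your recursion $c_{j+1}=2c_j\pm 1$ for the number of central digits in $\omega_j$, giving the frequency $\frac13\bigl(1-(-\frac12)^j\bigr)$, is correct and is a self-contained substitute for the paper's Lemma \ref{l61}, which is imported from Baker--Kong \cite{BD} (and re-proved in this paper via Theorem \ref{t14}, see Example \ref{e24}). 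One caution: with Section \ref{s3}'s normalization ($\omega_1=m+1$, $\omega_{k+1}=(\omega_k\overline{\omega_k})^+$) one has $\abs{\omega_j}=2^{j-1}$, not $2^j$, so your $\phi_j$ is the frequency in the block the paper denotes $u_j\overline{u_j}$; you must use one and the same indexing in the classification of tails and in the frequency count, otherwise the exponent in the final formula shifts by one.

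However, at the two places where the real work lies, your proposal asserts rather than proves. First, parts (ii) and (iii) stand or fall with the completeness statement---that every $(t_i)\in\uu'_{m,m}(q)$, up to reflection and a finite prefix, ends in $0^\infty$, $(\pm m)^\infty$, one of the blocks $(\omega_j\overline{\omega_j})^\infty$ admissible for the given $q$, or (at $q=q_{KL}$) a tail of $\pm(\lambda_i)$. You state this list, correctly call it the crux, and leave it unproved; the paper does not prove it either, but fills the gap by quoting a known structure theorem (\cite[Corollary 7.1 and Remark 5.7]{KK}, \cite[Lemma 4.12]{DLD}), recalled at the end of Section \ref{s3}. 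Without this input your argument yields only the lower-bound half of (ii)--(iii), i.e.\ that the listed values are attained, not that they exhaust $\dd_{m,m}(q)$. Second, in part (i) the step ``the univoque subshift has positive topological entropy, so it contains two blocks that can be concatenated freely'' is a non sequitur: positive entropy of a subshift does not yield freely concatenable words (minimal subshifts of positive entropy contain no such pair), let alone two with different central-digit frequencies. What replaces this in the paper is Lemma \ref{l62} (from \cite{DLD}): for $q>q_{KL}$, $\uu'_{m_1,m_2}(q)$ contains an explicit subshift of finite type on the four words $a_n,b_n,\overline{a_n},\overline{b_n}$, whose adjacency matrix permits the free concatenation of $w_1=b_n\overline{a_nb_n}a_n$ and $w_2=\overline{a_n}a_n$; Lemma \ref{l63} then carries out your splicing argument, including the verification (which your sketch also skips) that all three digit densities of the spliced sequence exist, so that the $\liminf$ in Lemma \ref{l44} is an honest limit and the dimension depends affinely on the interpolation parameter.
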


\begin{remark}\label{r13}
Since $m_1=m_2=m$, we have $q_{KL}>q_1=m+1$, so that $q-1>m$ under the assumptions of  Theorem \ref{t12}.
\end{remark}

For the proof of Theorem  \ref{t12} we will need a result (see Lemma \ref{l61} (ii) below) about the determination of the frequency of a given word in the Thue--Morse sequence
\begin{equation*}
(\tau_i)_{i=0}^{\infty}=0110\ 1001\ 1001\ 0110\ \cdots .
\end{equation*}
We recall that the sequence $(\tau_i)$ is defined recursively by $\tau_0:=0$ and
\begin{equation*}
\tau_{2^n}\cdots\tau_{2^{n+1}-1}:=\overline{\tau_0\cdots\tau_{2^n-1}}\qtq{for}n=0,1,\ldots .
\end{equation*}

It follows  from the definition that $\tau_{2i}\tau_{2i+1}\in\set{01,10}$ for every nonnegative integer $i$, and therefore the frequencies of the digits $0$ and $1$ are equal:
\begin{equation*}
\frac{\tau_0+\cdots+\tau_n}{n+1}\to \frac{1}{2}\qtq{as}n\to\infty.
\end{equation*}
We will prove a general theorem on the determination of the  frequencies of arbitrary finite blocks in Thue--Morse type sequences; as a very special case, it provides a new proof of Lemma \ref{l61} (ii).
Instead of the Thue--Morse sequence we  consider more general \emph{mirror sequences} \cite{A}.
Fix a positive integer $M$ and a finite word $\tau_0\cdots\tau_{\ell-1}\in\set{0,\ldots,M}^{\ell}$ of length $\ell\ge 1$.
Defining the reflection of a word $w=c_1\cdots c_n$ by the formula
\begin{equation*}
\overline{w}=\overline{c_1\cdots c_n}
:=(M-c_1)\cdots(M-c_n),
\end{equation*}
we extend $\tau_0\cdots\tau_{\ell-1}$ into an infinite sequence $(\tau_i)_{i=0}^{\infty}$ by the recursive formula
\begin{equation*}
\tau_{2^n\ell}\cdots\tau_{2^{n+1}\ell-1}:=\overline{\tau_0\cdots\tau_{2^n\ell-1}},\quad n=0,1,\ldots .
\end{equation*}
For example, the unique expansions of the so-called \emph{de Vries--Komornik constants} have this form \cite[p. 187]{DL}.

Given two words $\delta$ and $\epsilon$, let $\abs{\delta}$ denote the length of $\delta$, and let $N^\delta(\epsilon)$ denote the number of occurrences of $\delta$ in $\epsilon$, where $\abs{\delta}\le \abs{\epsilon}$.
The following theorem has an independent interest:

\begin{theorem}\label{t14}
Every finite word $\delta$ in $(\tau_i)_{i=0}^{\infty}$ has a density
\begin{equation*}
d(\delta):=\lim_{m\to\infty}\frac{N^{\delta}(\tau_0\cdots\tau_{m-1})}{m}.
\end{equation*}
Moreover, for every integer $n$ satisfying $4^n\ell\ge \abs{\delta}-1$ we have
\begin{equation*}
d(\delta)=d(\overline{\delta})=\frac{P-N}{6\cdot 4^n\ell}
\end{equation*}
with
\begin{align*}
N&=N^\delta(\tau_0\cdots\tau_{4^n\ell-1})
+N^{\overline{\delta}}(\tau_0\cdots\tau_{4^n\ell-1})\intertext{and}
P&=N^\delta(\tau_0\cdots\tau_{4^{n+1}\ell-1})
+N^{\overline{\delta}}(\tau_0\cdots\tau_{4^{n+1}\ell-1}).
\end{align*}
\end{theorem}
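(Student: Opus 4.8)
The plan is to exploit the self-similar structure of $(\tau_i)$. Writing $W_k:=\tau_0\cdots\tau_{2^k\ell-1}$ for the prefix of length $2^k\ell$, the defining recursion reads $W_{k+1}=W_k\,\overline{W_k}$, and since reflection is an involution one also has $\overline{W_{k+1}}=\overline{W_k}\,W_k$. I would first record the elementary identity $N^\delta(\overline{w})=N^{\overline{\delta}}(w)$, valid for every word $w$, since reflection acts letterwise and preserves positions. Setting
\[
g(k):=N^\delta(W_k)+N^{\overline{\delta}}(W_k),
\]
the decomposition $W_{k+1}=W_k\overline{W_k}$ splits each occurrence into one lying inside $W_k$, one inside $\overline{W_k}$, or one straddling the central boundary; writing $s_\delta(k)$ and $s_{\overline\delta}(k)$ for the numbers of straddling occurrences of $\delta$ and of $\overline\delta$ and using $N^\delta(\overline{W_k})=N^{\overline\delta}(W_k)$, this yields $g(k+1)=2g(k)+c_k$ with $c_k:=s_\delta(k)+s_{\overline\delta}(k)\le 2(\abs{\delta}-1)$. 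Telescoping $g(k)/2^k$ and using that $c_k$ is bounded shows $g(k)/2^k$ converges to some $\Gamma$, so $g(k)/(2^k\ell)\to\Gamma/\ell$. Moreover $h(k):=N^\delta(W_k)-N^{\overline\delta}(W_k)$ satisfies $h(k+1)=s_\delta(k)-s_{\overline\delta}(k)$, hence $\abs{h(k)}\le\abs{\delta}-1$ is bounded; therefore $N^\delta(W_k)/(2^k\ell)$ and $N^{\overline\delta}(W_k)/(2^k\ell)$ both tend to the common value $d:=\Gamma/(2\ell)$.

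Next I would upgrade convergence along the lengths $2^k\ell$ to convergence along all $m$. For $2^k\ell\le m<2^{k+1}\ell$ the prefix $\tau_0\cdots\tau_{m-1}$ equals $W_k$ followed by the length-$(m-2^k\ell)$ prefix of $\overline{W_k}$; since that prefix is $\overline{\tau_0\cdots\tau_{r-1}}$ with $r:=m-2^k\ell<m/2$, the identity $N^\delta(\overline{w})=N^{\overline\delta}(w)$ gives
\[
N^\delta(\tau_0\cdots\tau_{m-1})=N^\delta(W_k)+N^{\overline\delta}(\tau_0\cdots\tau_{r-1})+O(1),
\]
the $O(1)$ bounding the straddling term. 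Writing $e_\delta(m):=N^\delta(\tau_0\cdots\tau_{m-1})-dm$ and using $N^\delta(W_k)-d\,2^k\ell=O(1)$ (which follows from $\abs{h(k)}$ and $\abs{g(k)-\Gamma 2^k}$ both being $O(1)$), this becomes $e_\delta(m)=e_{\overline\delta}(r)+O(1)$ with $r<m/2$. Iterating while alternating the roles of $\delta$ and $\overline\delta$ bounds $\abs{e_\delta(m)}$ by $O(\log m)$, whence $N^\delta(\tau_0\cdots\tau_{m-1})/m\to d$. This shows that $d(\delta)$ exists and that $d(\delta)=d(\overline\delta)=d$.

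It remains to evaluate $d$, and here lies the main obstacle: the boundary counts $c_k$ do not stabilize outright but only up to parity. The straddling count depends solely on the length-$2(\abs{\delta}-1)$ window formed by the last $\abs{\delta}-1$ letters of $W_k$ followed by the first $\abs{\delta}-1$ letters of $\overline{W_k}$. The prefix part is constant once $2^k\ell\ge\abs{\delta}-1$, while the suffix of $W_k$, using $W_k=W_{k-1}\overline{W_{k-1}}$ and $\overline{W_{k-1}}=\overline{W_{k-2}}\,W_{k-2}$, satisfies the two-step relation ``suffix of $W_k$ equals suffix of $W_{k-2}$'' as soon as $2^{k-2}\ell\ge\abs{\delta}-1$; thus it is constant along even indices and along odd indices separately, but the two constants may differ. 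Consequently, for every $n$ with $4^n\ell\ge\abs{\delta}-1$ the quantity $2c_{2n}+c_{2n+1}$ takes one fixed value $D$, and composing two single steps gives the constant-coefficient recurrence
\[
g(2n+2)=4\,g(2n)+D.
\]
Hence $g(2n)=A\,4^n+B$ for suitable constants, with necessarily $A=\lim_n g(2n)/4^n=\Gamma=2d\ell$. Recognizing $N=g(2n)$ and $P=g(2n+2)$, the recurrence then gives
\[
P-N=A\,(4^{n+1}-4^n)=3A\,4^n=6\,d\,\ell\,4^n,
\]
so that $d(\delta)=d=(P-N)/(6\cdot 4^n\ell)$, as claimed. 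The genuinely delicate step is precisely this parity analysis of $c_k$: it is what forces the statement to be phrased at the levels $4^n\ell$ and to compare two successive levels, and it is where I expect to have to argue most carefully.
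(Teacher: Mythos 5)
Your proof is correct, and it reaches the formula by a genuinely different route than the paper. The paper works from the outset at scale four: with $w_k:=\tau_0\cdots\tau_{4^k\ell-1}$ it uses the four-block decomposition $w_{k+1}=w_k\,\overline{w_kw_k}\,w_k$ and the observation that every $w_k$ with $k\ge n$ both \emph{starts and ends} with the block $\epsilon:=w_n$; consequently all three junction counts are exactly constant in $k$, the pair $\bigl(N^\delta(w_k),N^{\overline\delta}(w_k)\bigr)$ satisfies an affine recursion with constant coefficients, and no parity issue ever appears. You instead work at scale two, where the junction term $c_k$ in $g(k+1)=2g(k)+c_k$ is only eventually $2$-periodic, and you recover a constant-coefficient recurrence by composing two steps; your resolution of the parity problem (suffix of $W_k$ equals suffix of $W_{k-2}$ once $2^{k-2}\ell\ge\abs{\delta}-1$, since $W_k=W_{k-1}\overline{W_{k-1}}$ and $\overline{W_{k-1}}=\overline{W_{k-2}}\,W_{k-2}$) is exactly the extra work that the paper's choice of scale is designed to avoid, and it is carried out correctly. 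The two identifications of the constant agree: your $D=2c_{2n}+c_{2n+1}=P-4N$ is the paper's $\alpha+\beta=P-4N$. The second real difference is the passage from prefix lengths to arbitrary $m$: the paper sandwiches $N^\delta(\tau_0\cdots\tau_{m-1})$ between block counts, using that a prefix of length $p\cdot4^{k+1}\ell$ splits into $4p$ blocks, half equal to $w_k$ and half to $\overline{w_k}$, and then takes a double limit in $m$ and $k$; your error-propagation argument $e_\delta(m)=e_{\overline\delta}(r)+O(1)$ with $r<m/2$, iterated while swapping $\delta$ and $\overline\delta$, is more self-contained and in fact yields the stronger quantitative statement $N^\delta(\tau_0\cdots\tau_{m-1})=d(\delta)\,m+O(\log m)$, a rate of convergence that the paper's sandwich argument does not provide. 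The one step you state without justification, $\abs{g(k)-\Gamma 2^k}=O(1)$, does follow immediately from the telescoping identity $\Gamma-g(k)/2^k=\sum_{j\ge k}c_j/2^{j+1}$ together with the bound $c_j\le 2(\abs{\delta}-1)$, so it is a harmless omission rather than a gap.
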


In our next two results we use the critical number
\begin{equation}\label{15}
q_{c}=\frac{m+2+\sqrt{m(m+4)}}{2},
\end{equation}
introduced in \cite{DLD}, which is the positive solution of the equation
\begin{equation}\label{16}
\frac{m+1}{q_c}+\sum_{i=2}^{\infty}\frac{m}{q_c^i}=1
\end{equation}
for the alphabet $\Omega_{2m}=\set{0,\ldots,2m}$.

\begin{theorem}\label{t15}\mbox{}
Assume that  $m_1=m_2=m$, and set
\begin{equation*}
c_1:=\frac{\log m}{\log q},\quad
c_2:=\frac{\log (m+1)}{\log q}.
\end{equation*}

\begin{enumerate}[\upshape (i)]
\item If $q\in [q_c,\infty)$, then $\dd_{m,m}(q)$ contains the interval $[c_1,c_2]$.
\item If $q\in (m+1,q_c)$, then there exists a $\delta>0$ such that
\begin{equation*}
\dd_{m,m}(q)\cap(c_2-\delta,c_2)=\varnothing.
\end{equation*}
\end{enumerate}
\end{theorem}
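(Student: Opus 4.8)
The plan is to connect the Hausdorff dimension of $\Gamma_{q,m}\cap(\Gamma_{q,m}+t)$ to the frequency of the digit $m$ in the unique expansion $(m+t_i)$ over the alphabet $\Omega_{2m}=\set{0,\ldots,2m}$. The underlying principle (developed in the earlier sections for Theorem~\ref{t11} and Theorem~\ref{t12}) is that when $t\in\uu_{m,m}(q)$, the intersection set is itself a self-similar set whose dimension is governed by the asymptotic frequency with which the ``free'' digits appear: at each position $i$ where $m+t_i\in\set{1,\ldots,2m-1}$ we have genuine branching, and the local number of admissible continuations reflects whether the digit is $m$ (giving $m+1$ choices, contributing $c_2$) or strictly between $1$ and $2m-1$ but different from $m$ (giving $m$ choices, contributing $c_1$). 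Thus, writing $f(t)$ for the frequency of the digit $m$ in $(m+t_i)$, the dimension should equal $c_1+(c_2-c_1)f(t)$, which interpolates linearly between $c_1$ and $c_2$ as $f(t)$ ranges over $[0,1]$. Proving part~(i) reduces to exhibiting, for the range $q\ge q_c$, a family of admissible $t\in\uu_{m,m}(q)$ realizing every frequency value in $[0,1]$.

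First I would make this dimension formula precise: I would show that for $t\in\uu_{m,m}(q)$, the dimension of the intersection equals $c_1+(c_2-c_1)d$, where $d$ is the density of the digit $m$ in the sequence $(m+t_i)$, using the frequency/self-similar-set machinery already invoked in the proofs of the preceding theorems. Second, I would exploit the defining property \eqref{16} of $q_c$. The equation $\frac{m+1}{q_c}+\sum_{i\ge2}\frac{m}{q_c^i}=1$ says precisely that the sequence $(m+1)m^\infty=(m+1)mmm\cdots$ is a quasi-greedy-type expansion of $1$ at the threshold $q_c$; this is exactly the combinatorial admissibility condition that controls which digit patterns can occur in a \emph{unique} expansion over $\Omega_{2m}$. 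The role of $q_c$ is that for $q\ge q_c$ the alphabet is ``loose enough'' that one can freely concatenate blocks dominated by the digit $m$ together with blocks dominated by digits away from $m$ while preserving uniqueness. Concretely, for each target frequency $p\in[0,1]$ I would build a unique expansion by interleaving a block that forces the digit $m$ with a block that forces a digit in $\set{1,\ldots,m-1}$, in proportions converging to $p$, and check via the uniqueness criterion (the lexicographic/greedy-lazy inequalities governed by $q\ge q_c$) that the resulting sequence lies in $\uu'_{m,m}(q)$.

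For part~(ii) the strategy is reversed: I would show that when $q\in(m+1,q_c)$ the admissibility constraints on unique expansions \emph{obstruct} frequencies of the digit $m$ that are too close to $1$. Since $q<q_c$, the inequality \eqref{16} fails in the strict direction, which means a unique expansion cannot contain arbitrarily long or arbitrarily dense runs dominated by the digit $m$ without violating the uniqueness (no-shift) condition; there is a forced minimum frequency of digits away from $m$. This caps the density $d$ strictly below $1$ by some gap depending on how far $q$ lies below $q_c$, and translating back through the formula $\dim = c_1+(c_2-c_1)d$ produces a $\delta>0$ with $\dd_{m,m}(q)\cap(c_2-\delta,c_2)=\varnothing$.

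The hard part will be part~(ii): making the obstruction quantitative. Showing merely that $d<1$ is not enough; I must produce a \emph{uniform} lower bound on the frequency of non-$m$ digits across \emph{all} admissible $t\in\uu_{m,m}(q)$, so that the excluded window $(c_2-\delta,c_2)$ has a fixed positive width. This requires a careful analysis of the finitely many ``dangerous'' digit patterns that the uniqueness condition forbids when $q<q_c$, and a pigeonhole/renewal argument bounding how often the digit $m$ can appear before such a forbidden pattern is unavoidable. Establishing the linear dimension formula of the first step rigorously (as opposed to citing the self-similar structure heuristically) is the other delicate point, but this should follow from the frequency results already available in Theorem~\ref{t14} and the self-similarity arguments used for Theorems~\ref{t11} and~\ref{t12}.
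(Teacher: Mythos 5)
Your plan contains a genuine gap, and it is concentrated in part (i). Before that, a correctable error: the dimension formula you take as your starting point, $\dim_H(\Gamma_{q,m}\cap(\Gamma_{q,m}+t))=c_1+(c_2-c_1)f(t)$, is false for general $t\in\uu_{m,m}(q)$. By Lemma \ref{l44} and \eqref{42}, a position with digit $m+t_j=j$ admits $m+1-\abs{j-m}$ continuations, not ``$m+1$ or $m$''; a digit $j\in\set{1,\ldots,m-2}$ contributes $\log(j+1)/\log q<c_1$, and the extreme digits contribute $0$ (e.g.\ $t=m/(q-1)$ has $f(t)=0$ but the intersection is a single point, of dimension $0$, not $c_1$). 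The linear formula is valid only for expansions whose centered digits lie in $\set{-1,0,1}$, i.e.\ $(m+t_i)\in\set{m-1,m,m+1}^{\NN}$, and with $\liminf$'s when densities fail to exist; as an \emph{upper} bound it survives in the form needed for part (ii).

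The fatal problem is that the sequences you propose for part (i) are not unique expansions at $q=q_c$ (nor just above it), so the construction collapses exactly at the left endpoint of $[q_c,\infty)$. The uniqueness criterion (Lemma \ref{l73}) is two-sided: every tail $(d_{n+i})$ \emph{and} every reflected tail $(-d_{n+i})$ must be $\prec\alpha'(q)$, and by \eqref{71} we have $\alpha'(q_c)=10^{\infty}$. Hence any centered digit $t_j\le -2$ (i.e.\ a digit $m+t_j\in\set{1,\ldots,m-2}$, which is what ``a digit in $\set{1,\ldots,m-1}$'' allows when $m\ge 3$, and which does not even exist when $m=1$) makes a reflected tail start with a digit $\ge 2$, violating the criterion outright. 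Even the one admissible choice, interleaving only the digits $m$ and $m-1$ (centered $\set{0,-1}$), fails: after an occurrence of $-1$ the reflected tail begins with $1$, and $\prec 10^{\infty}$ then forces the \emph{next nonzero centered digit to be $+1$} --- which your sequences never contain. So the uniqueness check you defer to would refute your construction. This is precisely why the paper's Lemma \ref{l75} uses the blocks $1(-1)$ and $0$: the digits $m+1$ and $m-1$ must alternate, separated by runs of the digit $m$; such sequences satisfy $(-1)0^{\infty}\prec(c_{n+i})\prec 10^{\infty}$ for every $n$, hence lie in $\uu_{m,m}'(q_c)\subset\uu_{m,m}'(q)$ for all $q\ge q_c$ by the monotonicity of $q\mapsto\alpha'(q)$ (Lemma \ref{l74}); varying the density of the $0$-runs then sweeps out $[c_1,c_2]$. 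Your guiding intuition that for $q\ge q_c$ one can ``freely concatenate blocks dominated by $m$ with blocks dominated by digits away from $m$'' is exactly what fails at $q_c$. For part (ii) your forced-pattern/renewal idea is the paper's argument ($\alpha'(q)$ starts with $10^k\ell$, $\ell<0$, forcing gaps $\le k$ between nonzero centered digits and the bound $\dim\le(c_1+kc_2)/(k+1)=c_2-\delta$), but note that a \emph{uniform} bound on the density of non-$m$ digits is impossible as you state it: tails $0^{\infty}$ (e.g.\ $t=0$) are admissible, have digit-$m$ density $1$ and dimension exactly $c_2$; one must split off this case (and the case of finitely many $m$'s, where $\dim\le c_1$) before applying the gap argument, which is why the excluded set is the open interval $(c_2-\delta,c_2)$.
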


In order to formulate our last theorem we introduce the sets
\begin{align*}
\ss_{m,m}(q)
&:=\set{t \in \uu_{m,m}(q): \Gamma_{q,m} \cap (\Gamma_{q,m}+t) \; \textrm{is a self-similar set}}
\intertext{and}
\hh_{m,m}(q)&:=\set{\dim_H(\Gamma_{q,m} \cap (\Gamma_{q,m}+t)): t \in \ss_{m,m}(q)}.
\end{align*}

\begin{theorem}\label{t16}\mbox{}
Assume that  $m_1=m_2=m$, and set
\begin{equation*}
c_1:=\frac{\log m}{\log q},\quad
c_2:=\frac{\log (m+1)}{\log q}.
\end{equation*}

\begin{enumerate}[\upshape (i)]
\item If $q\in [q_c,\infty)$, then $\hh_{m,m}(q)$ is dense in $[c_1,c_2]$.
\item If $q\in (m+1,q_c)$, then there exists a $\delta>0$ such that
\begin{equation*}
\hh_{m,m}(q)\cap(c_2-\delta,c_2)=\varnothing.
\end{equation*}
\end{enumerate}
\end{theorem}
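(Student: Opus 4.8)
The two parts have very different characters. Part (ii) is immediate from Theorem \ref{t15}: since $\ss_{m,m}(q)\subseteq\uu_{m,m}(q)$ we have $\hh_{m,m}(q)\subseteq\dd_{m,m}(q)$, so the interval $(c_2-\delta,c_2)$ missed by $\dd_{m,m}(q)$ is missed by $\hh_{m,m}(q)$ as well, and the same $\delta$ works. My plan for part (i) is to exhibit explicit \emph{purely periodic} unique expansions, whose intersection sets are automatically self-similar, and to arrange their dimensions to be dense in $[c_1,c_2]$.

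For part (i) I would first record the mechanism behind both the self-similarity and the dimension. If $t$ has the unique expansion $(t_i)$, a point lies in $\Gamma_{q,m}\cap(\Gamma_{q,m}+t)$ exactly when its two digit sequences $(a_i),(b_i)$ satisfy $a_i-b_i=t_i$, and for each index there are precisely $m+1-\abs{t_i}$ admissible pairs. Since $q\ge q_c>m+1$, the set $\Gamma_{q,m}$ has separated cylinders, so the intersection is a Moran set and
\begin{equation*}
\dim_H\bigl(\Gamma_{q,m}\cap(\Gamma_{q,m}+t)\bigr)
=\lim_{n\to\infty}\frac{1}{n\log q}\sum_{i=1}^{n}\log\bigl(m+1-\abs{t_i}\bigr)
\end{equation*}
whenever the limit exists. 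When $(t_i)$ is purely periodic with period $p$, periodicity forces $\sigma^p(t)=t$, so the tail set coincides with $\Gamma_{q,m}\cap(\Gamma_{q,m}+t)$ itself; the set then satisfies $K=\bigcup(A+q^{-p}K)$ and is genuinely self-similar with ratio $q^{-p}$, while the limit above reduces to the average of $\log(m+1-\abs{t_i})$ over one period. Thus every purely periodic $t\in\uu_{m,m}(q)$ already lies in $\ss_{m,m}(q)$.

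Next I would restrict to the three central digits $t_i\in\{-1,0,1\}$, whose contributions are $\log(m+1)$ when $t_i=0$ and $\log m$ when $\abs{t_i}=1$, so that a zero-frequency $f_0$ yields dimension $\bigl(\log m+f_0\log\tfrac{m+1}{m}\bigr)/\log q$, sweeping $[c_1,c_2]$ as $f_0$ runs through $[0,1]$. The decisive input is $q\ge q_c$: by \eqref{16} the quasi-greedy expansion of $1$ over $\set{0,\ldots,2m}$ satisfies $(\alpha_i)\succeq(m+1)m^\infty$, with equality at $q_c$. Feeding this into the standard lexicographic criterion for unique expansions—each tail, and each reflected tail, must stay strictly below $(\alpha_i)$—collapses the admissibility of a $\{-1,0,1\}$-sequence to one transparent rule: its nonzero digits must alternate in sign and it must not end in $(\pm1)0^\infty$. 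I would then realise any prescribed rational $f_0\in[0,1]$ by a periodic word carrying an even number of sign-alternating nonzero digits together with $k$ zeros (doubling the period if necessary to make the count of nonzeros even), obtaining a purely periodic unique expansion of zero-frequency $f_0$. As $f_0$ ranges over $\mathbb{Q}\cap[0,1]$ the resulting dimensions are dense in $[c_1,c_2]$, and by the previous paragraph each belongs to $\hh_{m,m}(q)$, which would give (i).

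The hard part is the reduction of the two lexicographic uniqueness conditions to the sign-alternation rule, uniformly for all $q\ge q_c$. The inequalities bite hardest at the junctions where a long block of zeros meets an isolated $\pm1$, precisely the configurations that dominate as $f_0\to1$; verifying there that $(\alpha_i)\succeq(m+1)m^\infty$ leaves just enough room—and that the room disappears for $q<q_c$, in harmony with part (ii)—is the crux of the construction. Once this admissibility bookkeeping is in place, the self-similarity, the dimension formula, and the density of the attainable frequencies $f_0$ combine at once to give the theorem.
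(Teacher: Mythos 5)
Your proposal is correct, and part (ii) coincides exactly with the paper's argument: from $\ss_{m,m}(q)\subset\uu_{m,m}(q)$ one gets $\hh_{m,m}(q)\subset\dd_{m,m}(q)$, and the $\delta$ of Theorem \ref{t15} (ii) works verbatim (the paper's text cites ``Theorem \ref{t16} (ii)'' here, an evident typo for Theorem \ref{t15} (ii)).

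For part (i) you follow the same underlying strategy as the paper --- periodic expansions over $\set{-1,0,1}$ whose nonzero digits alternate in sign, with the dimension governed by the frequency of zeros --- but the execution differs in two respects. First, what you call the crux (reducing the lexicographic uniqueness conditions to the sign-alternation rule, uniformly for $q\ge q_c$) is not re-derived in the paper: it is exactly Lemma \ref{l75}, whose proof is your computation with $(\alpha_i'(q_c))=10^{\infty}$ from \eqref{71} plus the monotonicity of $q\mapsto(\alpha_i'(q))$ in Lemmas \ref{l73}--\ref{l74}; note that your rule is an equivalence only at $q=q_c$, while for $q>q_c$ it is merely sufficient --- which is all you use. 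Second, the mechanisms for density and self-similarity differ. The paper recycles the sequences $(1(-1))^{a_1}0^{a_2}\cdots$ of Theorem \ref{t15} (i) realizing a given $d\in[c_1,c_2]$, periodizes their truncations $((1(-1))^{a_1}0^{a_2}\cdots(1(-1))^{a_{k-1}}0^{a_k})^{\infty}$, gets self-similarity from Lemma \ref{l76} (the Kong--Li--Dekking criterion: strong eventual periodicity of $(m-\abs{t_i})$), and lets $k\to\infty$ so the dimensions converge to $d$ by Lemma \ref{l44}. You instead realize every rational zero-frequency $f_0$ exactly by one purely periodic block such as $(1(-1))^{a}0^{b}$, and justify self-similarity directly: a purely periodic admissible $(t_i)$ makes the intersection the attractor of an IFS of similarities with common ratio $q^{-p}$. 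Your route avoids the limiting argument and the appeal to the criterion of \cite{DLD} (though any purely periodic $(t_i)$ satisfies it trivially, since $(m-\abs{t_i})=J^{\infty}=JJ^{\infty}$), at the cost of redoing admissibility and self-similarity by hand; the paper's route is shorter because Lemmas \ref{l75} and \ref{l76} are already in place. Both arguments are sound and yield a dense set of dimensions in $[c_1,c_2]$.
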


In Section \ref{s2} we prove Theorem \ref{t14}, and we  illustrate the results by several examples.
In Section \ref{s3} we recall some notions and results on unique expansions in non-integer bases.
In Section \ref{s4} we collect several preliminary results that we need for the proof of the later theorems.
Then Theorems \ref{t11} and \ref{t12} are proved in Sections \ref{s5} and \ref{s6}, respectively.
Finally, Theorems \ref{t15} and \ref{t16} are proved in Section \ref{s7}.

\section{Proof of Theorem \ref{t14} and some examples}\label{s2}

We recall the statement of Theorem\ref{t14}.
Let $M$ be a positive integer $M$ and $\tau_0\cdots\tau_{\ell-1}\in\set{0,\ldots,M}^{\ell}$ a finite word  of length $\ell\ge 1$.
Defining the reflection of a word $w=c_1\cdots c_n$ by the formula
\begin{equation*}
\overline{w}=\overline{c_1\cdots c_n}
:=(M-c_1)\cdots(M-c_n),
\end{equation*}
we define an infinite sequence $(\tau_i)_{i=0}^{\infty}$ by the recursive formula
\begin{equation*}
\tau_{2^n\ell}\cdots\tau_{2^{n+1}\ell-1}:=\overline{\tau_0\cdots\tau_{2^n\ell-1}}\quad n=0,1,\ldots .
\end{equation*}

\begin{definition}\label{d21}
Given three words $\delta$, $\epsilon$ and $\zeta$, let
\begin{itemize}
\item  $\abs{\delta}$ denote the length of $\delta$;
\item  $N^\delta(\epsilon)$ denote the number of occurrences of $\delta$ in $\epsilon$, where $\abs{\delta}\le \abs{\epsilon}$;
\item  $\nu^\delta(\epsilon,\zeta)$ denote the number of those occurrences of $\delta$ in $\epsilon\zeta$ whose first and last letter belongs to $\epsilon$ and $\zeta$, respectively.
\end{itemize}
\end{definition}

\begin{t14}
Every finite word $\delta$ in $(\tau_i)_{i=0}^{\infty}$ has a density
\begin{equation*}
d(\delta):=\lim_{m\to\infty}\frac{N^{\delta}(\tau_0\cdots\tau_{m-1})}{m}.
\end{equation*}
Moreover, for every integer $n$ satisfying $4^n\ge \abs{\delta}-1$ we have
\begin{equation*}
d(\delta)=d(\overline{\delta})=\frac{P-N}{6\cdot 4^n\ell}
\end{equation*}
with
\begin{align*}
N&=N^\delta(\tau_0\cdots\tau_{4^n-1})
+N^{\overline{\delta}}(\tau_0\cdots\tau_{4^n-1})\intertext{and}
P&=N^\delta(\tau_0\cdots\tau_{4^{n+1}-1})
+N^{\overline{\delta}}(\tau_0\cdots\tau_{4^{n+1}-1}).
\end{align*}
\end{t14}

We need a lemma.

\begin{lemma}\label{l22}
Given three non-empty words $\delta$, $\epsilon$ and $\zeta$, we have the following relations:
\begin{align*}
&N^{\overline{\delta}}(\epsilon)=N^\delta(\overline{\epsilon}),\\
&N^\delta(\epsilon\zeta)=N^\delta(\epsilon)+N^\delta(\zeta)+\nu^\delta(\epsilon,\zeta),\\
&\nu^\delta(\epsilon,\zeta)\le \abs{\delta}-1 ,\\
&\abs{N^{\overline{\delta}}(\epsilon\overline{\epsilon})
-N^\delta(\epsilon\overline{\epsilon})}
\le \abs{\delta}-1.
\end{align*}
\end{lemma}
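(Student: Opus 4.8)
The plan is to prove the four relations essentially one at a time, since they are increasingly interdependent: the first is a symmetry statement, the second and third are general combinatorial facts about counting occurrences of a fixed word across a concatenation, and the fourth (the real payload for Theorem \ref{t14}) combines these with the reflection structure of the word $\epsilon\overline{\epsilon}$.

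\begin{proof}[Proof sketch]
First I would dispose of the identity $N^{\overline{\delta}}(\epsilon)=N^{\delta}(\overline{\epsilon})$. The map $w\mapsto\overline{w}$, sending each letter $c$ to $M-c$, is an involution on words over $\set{0,\ldots,M}$ that preserves length and respects concatenation, $\overline{uv}=\overline{u}\,\overline{v}$. Hence a factor of $\epsilon$ equals $\overline{\delta}$ at position $i$ if and only if the corresponding factor of $\overline{\epsilon}$ equals $\delta$ at the same position $i$; this is a position-preserving bijection between the two sets of occurrences, so the counts agree.

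Next I would treat the concatenation formula $N^{\delta}(\epsilon\zeta)=N^{\delta}(\epsilon)+N^{\delta}(\zeta)+\nu^{\delta}(\epsilon,\zeta)$. Every occurrence of $\delta$ in $\epsilon\zeta$ is a length-$\abs{\delta}$ window whose starting position falls somewhere in $\epsilon\zeta$; I would classify each occurrence by whether its window lies entirely inside $\epsilon$, entirely inside $\zeta$, or straddles the junction (first letter in $\epsilon$, last letter in $\zeta$). These three cases are mutually exclusive and exhaustive, and they are counted respectively by $N^{\delta}(\epsilon)$, $N^{\delta}(\zeta)$, and — by Definition \ref{d21} — by $\nu^{\delta}(\epsilon,\zeta)$. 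The bound $\nu^{\delta}(\epsilon,\zeta)\le\abs{\delta}-1$ follows because a straddling occurrence is determined by how many of its letters lie in $\epsilon$, and that number ranges only over $1,\ldots,\abs{\delta}-1$, giving at most $\abs{\delta}-1$ possible straddling windows.

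The last inequality is the crux, and the main obstacle lies in controlling the straddling terms symmetrically. I would apply the concatenation formula to both $\epsilon\overline{\epsilon}$ with the pattern $\delta$ and with the pattern $\overline{\delta}$, obtaining
\begin{align*}
N^{\delta}(\epsilon\overline{\epsilon})&=N^{\delta}(\epsilon)+N^{\delta}(\overline{\epsilon})+\nu^{\delta}(\epsilon,\overline{\epsilon}),\\
N^{\overline{\delta}}(\epsilon\overline{\epsilon})&=N^{\overline{\delta}}(\epsilon)+N^{\overline{\delta}}(\overline{\epsilon})+\nu^{\overline{\delta}}(\epsilon,\overline{\epsilon}).
\end{align*}
The key observation is that the reflection involution turns $\epsilon\overline{\epsilon}$ into $\overline{\epsilon}\,\epsilon$ while swapping $\delta\leftrightarrow\overline{\delta}$, and combined with the first identity of the lemma this matches the interior (non-straddling) occurrences of $\delta$ against those of $\overline{\delta}$ exactly: indeed $N^{\delta}(\overline{\epsilon})=N^{\overline{\delta}}(\epsilon)$ and $N^{\overline{\delta}}(\overline{\epsilon})=N^{\delta}(\epsilon)$, so the four interior terms cancel in pairs upon subtracting. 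What survives is precisely the difference of the two straddling counts,
\begin{equation*}
N^{\overline{\delta}}(\epsilon\overline{\epsilon})-N^{\delta}(\epsilon\overline{\epsilon})
=\nu^{\overline{\delta}}(\epsilon,\overline{\epsilon})-\nu^{\delta}(\epsilon,\overline{\epsilon}),
\end{equation*}
and since each $\nu$-term is a nonnegative integer bounded above by $\abs{\delta}-1$ (by the third relation, using $\abs{\overline{\delta}}=\abs{\delta}$), their difference has absolute value at most $\abs{\delta}-1$. This yields the stated bound. The delicate point to verify carefully is the bookkeeping that guarantees the interior terms really do cancel — that is, that the straddling occurrences are exactly the occurrences not already accounted for inside $\epsilon$ or inside $\overline{\epsilon}$ — which is exactly what the disjoint classification in the second relation secures.
\end{proof}
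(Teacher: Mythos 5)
Your proposal is correct and follows essentially the same route as the paper: the first three relations are read off from Definition \ref{d21}, and the fourth is obtained by applying the concatenation formula with $\zeta=\overline{\epsilon}$ to both $\delta$ and $\overline{\delta}$, cancelling the interior counts via the reflection identity, and bounding the difference of the two straddling counts by $\abs{\delta}-1$. The paper states this in one line ("choose $\zeta=\overline{\epsilon}$"); you have simply supplied the bookkeeping it leaves implicit.
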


\begin{proof}
The first three properties follow from Definition \ref{d21}.
They imply the last inequality by choosing $\zeta=\overline{\epsilon}$.
\end{proof}

\begin{proof}[Proof of Theorem \ref{t14}]
We have to establish the following limit relations:
\begin{equation}\label{21}
\lim_{m\to\infty}\frac{N^{\delta}(\tau_0\cdots\tau_{m-1})}{m}
=\lim_{m\to\infty}\frac{N^{\overline{\delta}}(\tau_0\cdots\tau_{m-1})}{m}
=\frac{P-N}{6\cdot 4^n\ell}.
\end{equation}

We will use the following notation:
\begin{align*}
\epsilon&:=\tau_0\cdots\tau_{4^n\ell-1},\\
x_k&:=N^\delta(\tau_0\cdots\tau_{4^k\ell-1})
,\\
y_k&:=N^\delta(\overline{\tau_0\cdots\tau_{4^k\ell-1}}),\\
\alpha&:=\nu^\delta(\epsilon,\overline{\epsilon})+\nu^\delta(\overline{\epsilon},\overline{\epsilon})
+\nu^\delta(\overline{\epsilon},\epsilon),\\
\beta&:=\nu^{\overline{\delta}}(\epsilon,\overline{\epsilon})+\nu^{\overline{\delta}}(\overline{\epsilon},\overline{\epsilon})
+\nu^{\overline{\delta}}(\overline{\epsilon},\epsilon).
\end{align*}
We proceed in several steps.
\medskip

(i) We prove that
\begin{equation}\label{22}
x_{k+1}=2x_k+2y_k+\alpha
\qtq{and}
y_{k+1}=2x_k+2y_k+\beta
\end{equation}
for all $k\ge n$.
Writing $w_k:=\tau_0\cdots\tau_{4^k\ell-1}$ for brevity, it follows from the definition of $(\tau_i)$ that
\begin{equation*}
w_{k+1}=w_k \overline{w_kw_k} w_k,
\end{equation*}
and that $w_k$ starts and ends with the block $\epsilon$.
Since $\abs{\delta}\le\abs{\epsilon}+1$ by our assumption on $n$, distingishing seven different cases according to the position of the first letter of the occurrence of $\delta$ as a subword, it follows that
\begin{equation*}
N^{\delta}(w_{k+1})
=N^{\delta}(w_k)
+\nu^{\delta}(\epsilon,\overline{\epsilon})
+N^{\delta}(\overline{w_k})
+\nu^{\delta}(\overline{\epsilon},\overline{\epsilon})
+N^{\delta}(\overline{w_k})
+\nu^{\delta}(\overline{\epsilon},\epsilon)
+N^{\delta}(w_k).
\end{equation*}
This is equivalent to the first equality of \eqref{22}.
The second equality follows by changing $\delta$ to $\overline{\delta}$ in the above equality.
 \medskip

(ii) It follows from \eqref{22} that
\begin{equation*}
x_{k+1}+y_{k+1}+\frac{\alpha+\beta}{3}=4\left(x_k+y_k+\frac{\alpha+\beta}{3}\right),\qtq{for all}k\ge n.
\end{equation*}
For $k=n$ this may be written as
\begin{equation*}
P+\frac{\alpha+\beta}{3}=4\left(N+\frac{\alpha+\beta}{3}\right),
\end{equation*}
Hence $\alpha+\beta=P-4N$, and the following equality holds for all $k\ge n$:
\begin{align*}
x_{k+1}
&=2(x_k+y_k)+\alpha
=2\left(x_k+y_k+\frac{\alpha+\beta}{3}\right)+\frac{\alpha-2\beta}{3}\\
&=2\cdot 4^{k-n}\left(x_n+y_n+\frac{\alpha+\beta}{3}\right)+\frac{\alpha-2\beta}{3}\\
&=2\cdot 4^{k-n}\left(N+\frac{P-4N}{3}\right)+\frac{\alpha-2\beta}{3}\\
&=2\cdot 4^{k-n}\cdot\frac{P-N}{3}+\frac{\alpha-2\beta}{3}.
\end{align*}
Since $y_{k+1}=x_{k+1}+\beta-\alpha$ for all $k\ge n$ by \eqref{22}, it follows that
\begin{equation*}
\frac{x_{k+1}}{4^{k+1}}=\frac{P-N}{6\cdot 4^n}+\frac{\alpha-2\beta}{3\cdot 4^{k+1}}
\qtq{and}
\frac{y_{k+1}}{4^{k+1}}=\frac{P-N}{6\cdot 4^n}+\frac{\beta-2\alpha}{3\cdot 4^{k+1}}
\end{equation*}
for all $k\ge n$, and hence
\begin{equation}\label{23}
\lim_{k\to\infty}\frac{x_k}{4^k}
=\lim_{k\to\infty}\frac{y_k}{4^k}
=\frac{P-N}{6\cdot 4^n}.
\end{equation}
\medskip

(iii)
Fix an arbitrary integer $k\ge n$.
For any positive integer $p$, $\tau_0\cdots\tau_{p\cdot4^{k+1}\ell-1}$ is formed of $4p$ consecutive blocks of equal length, half of which are identical with $\tau_0\cdots\tau_{4^k\ell-1}$, and the other half with $\overline{\tau_0\cdots\tau_{4^k\ell-1}}$.
Therefore, by an argument similar to (i) we have
\begin{align*}
2p(x_k+y_k)\le N^{\delta}(\tau_0\cdots\tau_{p\cdot4^{k+1}\ell-1})
\le 2p(x_k+y_k)+(4p-1)(\abs{\delta}-1)
\intertext{and}
2p(x_k+y_k)\le N^{\overline{\delta}}(\tau_0\cdots\tau_{p\cdot4^{k+1}\ell-1})
\le 2p(x_k+y_k)+(4p-1)(\abs{\delta}-1).
\end{align*}
Now if $m\ge 4^{k+1}\ell$ is an integer, and  $p$ is the integer part of $m/(4^{k+1}\ell)$, then
\begin{equation}\label{24}
p\cdot 4^{k+1}\ell\le m<(p+1)\cdot 4^{k+1}\ell,
\end{equation}
and we infer from the above relations the following inequalities:
\begin{align*}
2p(x_k+y_k)\le N^{\delta}(\tau_0\cdots\tau_{m-1})
\le 2(p+1)(x_k+y_k)+(4p+3)(\abs{\delta}-1)
\intertext{and}
2p(x_k+y_k)\le N^{\overline{\delta}}(\tau_0\cdots\tau_{m-1})
\le 2(p+1)(x_k+y_k)+(4p+3)(\abs{\delta}-1).
\end{align*}
Combining them with \eqref{24} we obtain that
\begin{multline*}
\frac{2p}{4(p+1)}\left(\frac{x_k}{4^k\ell}+\frac{y_k}{4^k\ell}\right)
\le \frac{N^{\delta}(\tau_0\cdots\tau_{m-1})}{m}\\
\le \frac{2(p+1)}{4p}\left(\frac{x_k}{4^k\ell}+\frac{y_k}{4^k\ell}\right)
+\frac{(4p+3)(\abs{\delta}-1)}{4p\cdot 4^k\ell}
\end{multline*}
and
\begin{multline*}
\frac{2p}{4(p+1)}\left(\frac{x_k}{4^k\ell}+\frac{y_k}{4^k\ell}\right)
\le \frac{N^{\overline{\delta}}(\tau_0\cdots\tau_{m-1})}{m}\\
\le \frac{2(p+1)}{4p}\left(\frac{x_k}{4^k\ell}+\frac{y_k}{4^k\ell}\right)
+\frac{(4p+3)(\abs{\delta}-1)}{4p\cdot 4^k\ell}.
\end{multline*}

If $m\to\infty$, then $p\to\infty$, so that we infer from these inequalities the following relations:
\begin{multline*}
\frac{1}{2}\left(\frac{x_k}{4^k\ell}+\frac{y_k}{4^k\ell}\right)
\le\liminf_{m\to\infty}\frac{N^\delta(\tau_0\cdots\tau_{m-1})}{m}\\
\le\limsup_{m\to\infty}\frac{N^\delta(\tau_0\cdots\tau_{m-1})}{m}
\le \frac{1}{2}\left(\frac{x_k}{4^k\ell}+\frac{y_k}{4^k\ell}\right)
+\frac{\abs{\delta}-1}{4^k\ell}
\end{multline*}
and
\begin{multline*}
\frac{1}{2}\left(\frac{x_k}{4^k\ell}+\frac{y_k}{4^k\ell}\right)
\le\liminf_{m\to\infty}\frac{N^{\overline{\delta}}(\tau_0\cdots\tau_{m-1})}{m}\\
\le\limsup_{m\to\infty}\frac{N^{\overline{\delta}}(\tau_0\cdots\tau_{m-1})}{m}
\le \frac{1}{2}\left(\frac{x_k}{4^k\ell}+\frac{y_k}{4^k\ell}\right)
+\frac{\abs{\delta}-1}{4^k\ell}.
\end{multline*}
This is true for every $k\ge n$.
Letting $k\to\infty$ and using \eqref{23} we conclude \eqref{21}.
\end{proof}

\begin{examples}\label{e23}\mbox{}
In the following examples we consider the Thue--Morse sequence, i.e., $M=1$, $\ell=1$ and $\tau_0=0$.
\begin{enumerate}[\upshape (i)]
\item If $\delta=0$, then choosing $n=0$ we have $N=1$ and $P=4$, so that
\begin{equation*}
d^0=d^1=\frac{4-1}{6\cdot 1}=\frac{1}{2}.
\end{equation*}
\item If $\delta=01$, then choosing $n=1$ we have $N=2$ and $P=10$, so that
\begin{equation*}
d^{01}=d^{10}=\frac{10-2}{6\cdot 4}=\frac{1}{3}.
\end{equation*}
\item If $\delta=00$, then choosing $n=1$ we have $N=1$ and $P=5$, so that
\begin{equation*}
d^{00}=d^{11}=\frac{5-1}{6\cdot 4}=\frac{1}{6}.
\end{equation*}
\item If $\delta=000$, then choosing $n=1$ we have $N=P=0$, so that
\begin{equation*}
d^{000}=d^{111}=\frac{0-0}{6\cdot 4}=0.
\end{equation*}
\item If $\delta=001$, then choosing $n=1$ we have $N=1$ and $P=5$, so that
\begin{equation*}
d^{001}=d^{110}=\frac{5-1}{6\cdot 4}=\frac{1}{6}.
\end{equation*}
\item If $\delta=010$, then choosing $n=1$ we have $N=0$ and $P=4$, so that
\begin{equation*}
d^{010}=d^{101}=\frac{4-0}{6\cdot 4}=\frac{1}{6}.
\end{equation*}
\item If $\delta=011$, then choosing $n=1$ we have $N=1$ and $P=5$, so that
\begin{equation*}
d^{011}=d^{100}=\frac{5-1}{6\cdot 4}=\frac{1}{6}.
\end{equation*}
\item If $\delta=00101$, then choosing $n=1$ we have $N=0$ and $P=1$, so that
\begin{equation*}
d^{00101}=d^{11010}=\frac{1-0}{6\cdot 4}=\frac{1}{24}.
\end{equation*}
\end{enumerate}
\end{examples}

\begin{example}\label{e24}
Let $(\tau_i)$ be the Thue--Morse sequence.
We compute the frequencies of the digits $-1, 0, 1$ in the sequence $(\tau_i-\tau_{i-1})$.
Since $\tau_i\in\set{0,1}$ for every $i$, the frequency of the digit $-1$  in $(\tau_i-\tau_{i-1})$ is equal to the frequency of the words $10$ in $(\tau_i)$.
Applying Theorem \ref{t14} with $\delta=10$ and $n=1$ we obtain that
\begin{align*}
N&=N^{01}(\tau_0\cdots\tau_3)
+N^{10}(\tau_0\cdots\tau_3)\intertext{and}
P&=N^{01}(\tau_0\cdots\tau_{15})
+N^{10}(\tau_0\cdots\tau_{15}).
\end{align*}
Using the equalities
\begin{equation*}
\tau_0\cdots\tau_3=0110
\qtq{and}
\tau_0\cdots\tau_{15}=0110\ 1001\ 1001\ 0110
\end{equation*}
we obtain that $N=1+1=2$, $P=5+5=10$, and therefore
\begin{equation*}
d(01)=d(10)=\frac{P-N}{6\cdot 4}=\frac{10-2}{24}=\frac{1}{3}.
\end{equation*}
It follows that both digits $1$ and $-1$ have density $1/3$ in $(\tau_i-\tau_{i-1})$.
It follows that the third digit $0$ also has a density in $(\tau_i-\tau_{i-1})$, and it is also equal to $1/3$.
\end{example}

\section{Review of some results on unique expansions}\label{s3}

In this section we fix a positive integer $M$ and we consider expansions in a base $q>1$ over the alphabet $\set{0,1,\ldots,M}$. First we introduce some important notation and lemmas. We use the \emph{lexicographical order}. For any two sequences $(a_i)$ and $(b_i)\in \set{0,\ldots,M}^{\NN}$, we write $(a_i)\prec(b_i)$ if there exists a positive integer $n$ such that $a_i=b_i$ for $i=1,\ldots,n-1$ (this condition is void if $n=1$), and $a_n<b_n$.
Furthermore, we write $(a_i)\preceq(b_i)$ if $(a_i)\prec(b_i)$ or $(a_i)=(b_i)$.
Sometimes we write  $(b_i)\succ(a_i)$ instead of $(a_i)\prec(b_i)$, and $(b_i)\succeq(a_i)$ instead of $(a_i)\preceq(b_i)$.

We let $\overline{(a_i)}:=(M-a_i)$ denote the \emph{reflection} of a sequence $(a_i)$, and for a word $w=b_1\cdots b_n$ we write
\begin{align*}
&\overline{w}=(M-b_1)\cdots (M-b_n),\\
&w^{-}:=b_1\cdots b_{n-1}(b_n-1)\qtq{if}b_n>0,\\
&w^{+}:=b_1\cdots b_{n-1}(b_n+1)\qtq{if}b_n<M.
\end{align*}
We call a sequence $(a_i)$ \emph{infinite} if it does not have a last nonzero digit, and \emph{finite} otherwise.

We define the \emph{greedy expansion} of $x$ as the lexicographically largest expansion of $x$, and \emph{quasi-greedy expansion}  of $x$ as the lexicographically largest infinite expansion of $x$. We let $\uu_{M,q}$ denote the set of numbers $x \in \left[0,\frac{M}{q-1}\right] $ having a unique expansion in base $q$, and $\uu_{M,q}'\subset\set{0,\ldots,M}^{\NN}$ denote the set of the corresponding unique expansions.

We recall from \cite{KL,KL2,KL-2007,KLP-2011,K-2012,VKL,B}
that there exists a smallest base $q_{KL}$ in which $1$ has a unique expansion, and there exists a smallest base $q_1$ in which $1$ has a unique \emph{doubly infinite} expansion.
Here an infinite sequence $(c_i)$ is called doubly infinite if its \emph{reflection} $(M-c_i)$ is also infinite.
We have $1<q_1<q_{KL}<M+1$.
The base $q_1$ is given by the explicit formulas
\begin{equation}\label{31}
q_1=
\begin{cases}
(m+\sqrt{m^2+4m})/2&\text{if $M=2m-1$,}\\
m+1&\text{if $M=2m$,}
\end{cases}
\end{equation}
and the lexicographically largest (greedy) expansion  of $1$ in base $q_1$ is $\omega_10^{\infty}$ with
\begin{equation}\label{32}
\omega_1=
\begin{cases}
mm&\text{if $M=2m-1$,}\\
m+1&\text{if $M=2m$,}
\end{cases}
\quad m=1,2,\ldots .
\end{equation}

\begin{remark}\label{r31}
If we change $m$ to $m+1$ in the first formula of \eqref{31}, then we get the original expression in \cite{B}:
\begin{equation*}
\frac{m+1+\sqrt{(m+1)^2+4(m+1)}}{2}
=\frac{m+1+\sqrt{m^2+6m+5}}{2}.
\end{equation*}
\end{remark}

The base $q_{KL}$ is a transcendental number \cite{AC,KL2,DL}, and the unique expansion $(p_i)_{i=1}^{\infty}$ of $1$ in this base is given by the formulas
\begin{equation}\label{33}
p_i=
\begin{cases}
m-1+\tau_i&\text{if}\quad M=2m-1,\\
m+\tau_i-\tau_{i-1}&\text{if}\quad M=2m,
\end{cases}
\quad i=1,2,\ldots
\end{equation}
where
\begin{equation*}
(\tau_i)_{i=0}^{\infty}=0110\ 1001\ 1001\ 0110\ \cdots
\end{equation*}
denotes the Thue--Morse sequence, defined in the introduction.
We note that
\begin{equation}\label{34}
(p_i)\in
\begin{cases}
\set{m-1,m}^{\NN}&\text{if}\quad M=2m-1,\\
\set{m-1,m,m+1}^{\NN}&\text{if}\quad M=2m.
\end{cases}
\end{equation}

It was discovered in \cite[Corollary 15]{SN2} and generalized in \cite{B,VK,DLD} that the bases $q_1$ and $q_{KL}$ are critical for the size of the sets $\uu_{M,q}$ of  real numbers having a unique expansion in base $q$ over the alphabet $\set{0,1,\ldots,M}$, namely:
\begin{itemize}
\item $\uu_{M,q}$ has only the trivial elements $0$ and $\frac{M}{q-1}$ if $1<q\le q_1$;
\item $\uu_{M,q}$ is countably infinite if $q_1<q<q_{KL}$;
\item $\uu_{M,q}$ has the cardinality of the continuum if $q=q_{KL}$;
\item $\uu_{M,q}$ has a positive Hausdorff dimension if $q>q_{KL}$.
\end{itemize}

We will also use some other bases related with to the Thue--Morse sequence.
The number $1$ has a finite greedy expansion of the form $\omega_1\ 0^{\infty}$ in base $q_1$ where $\omega_1$ is a finite word having a last nonzero digit.
Starting with  $\omega_1$ we define by induction a sequence of words by the formula
\begin{equation*}
\omega_{k+1}:=(\omega_k\overline{\omega_k})^+,\quad k=1,2,\ldots .
\end{equation*}
Then $\omega_k0^{\infty}$ is the greedy expansion of $1$ in some base $q_k$, and $(q_k)_{k=1}^{\infty}$ is an increasing sequence converging to $q_{KL}$; see \cite{KL-2007,VKL} for details.
Let us note that the words $\omega_k$ correspond to the construction of the so-called  $q$-mirror sequences described in \cite[Section III-1]{A}.

If $M$ is even, then $q_1$ is an integer.
If $M$ is odd, then $q_1$ is not an integer, and we let $q_0$ denote   its integer part.
Then $1$ also has a finite greedy expansion of the form $\omega_0\ 0^{\infty}$ in base $q_0$ where $\omega_0$ is a finite word having a last nonzero digit, and
\begin{equation*}
\omega_1:=(\omega_0\overline{\omega_0})^+.
\end{equation*}

We recall from \cite[Corollary 7.1 and Remark 5.7]{KK}  that if $q_k<q\le q_{k+1}$ for some $k\ge 1$ and $(t_i)\in\uu_{M,q}'$, then  $(t_i)$ or its reflection $(M-t_i)$ is eventually periodic with one of the period blocks
\begin{equation*}
0, \omega_0\overline{\omega_0}, \omega_1\overline{\omega_1},\ldots, \omega_{k-1}\overline{\omega_{k-1}}
\end{equation*}
if $M$ is odd, and
\begin{equation*}
0,\omega_0^-, \omega_1\overline{\omega_1}, \ldots, \omega_{k-1}\overline{\omega_{k-1}}
\end{equation*}
if $M$ is even; see \cite[Lemma 4.12]{DLD}.

\section{Preliminary lemmas}\label{s4}

The following two lemmas reduce to \cite[Lemma 3.3 and Theorem 3.4]{WLDX} if $m_1=m_2=1$ and $q>2$.

\begin{lemma}\label{l41}
Assume that  $q-1>m_1\ge m_2$.
If $t \in \Gamma_{q,m_1}-\Gamma_{q,m_2}$, then
\begin{equation*}
\Gamma_{q,m_1}\cap(\Gamma_{q,m_2}+t)
=\bigcup_{(t_i)}\set{\sum_{i=1}^{\infty}\frac{c_i}{q^i}\ :\
c_i\in\Omega_{m_1}\cap(\Omega_{m_2}+t_i)\qtq{for all}i},
\end{equation*}
where we take the union over all the expansions $(t_i)$ of $t$.
\end{lemma}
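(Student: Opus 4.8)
The plan is to prove the set identity by establishing the two inclusions separately. Both reduce to elementary digit manipulations, so I expect no serious analytic obstacle, only careful bookkeeping about which alphabet each digit sequence inhabits.

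For the inclusion $\supseteq$, I would fix an expansion $(t_i)$ of $t$ over $\Omega_{-m_2,m_1}$ together with a sequence $(c_i)$ satisfying $c_i\in\Omega_{m_1}\cap(\Omega_{m_2}+t_i)$ for every $i$, and set $x:=\sum_{i=1}^{\infty}c_i/q^i$. Since each $c_i\in\Omega_{m_1}$, the number $x$ lies in $\Gamma_{q,m_1}$ by definition. Since $c_i\in\Omega_{m_2}+t_i$, the quantity $c_i-t_i$ belongs to $\Omega_{m_2}$, so that $x-t=\sum_{i=1}^{\infty}(c_i-t_i)/q^i$ lies in $\Gamma_{q,m_2}$; here the series converges absolutely because the digits are bounded and $q>1$, and its value is indeed $x-t$ because $\sum_{i=1}^{\infty}t_i/q^i=t$. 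Hence $x\in\Gamma_{q,m_1}\cap(\Gamma_{q,m_2}+t)$. This direction uses only $q>1$.

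For the reverse inclusion $\subseteq$, I would take any $x\in\Gamma_{q,m_1}\cap(\Gamma_{q,m_2}+t)$ and choose an expansion $x=\sum_{i=1}^{\infty}c_i/q^i$ with $c_i\in\Omega_{m_1}$, together with an expansion $x-t=\sum_{i=1}^{\infty}d_i/q^i$ with $d_i\in\Omega_{m_2}$. The key construction is to set $t_i:=c_i-d_i$. Then $t_i\in\set{-m_2,\dots,m_1}=\Omega_{-m_2,m_1}$, and $\sum_{i=1}^{\infty}t_i/q^i=x-(x-t)=t$, so $(t_i)$ is one of the expansions indexing the union. For this particular $(t_i)$ one has $c_i\in\Omega_{m_1}$ by the choice of $(c_i)$, and $c_i=t_i+d_i\in\Omega_{m_2}+t_i$, whence $c_i\in\Omega_{m_1}\cap(\Omega_{m_2}+t_i)$ for every $i$; this places $x$ in the term of the union attached to $(t_i)$.

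The nearest thing to an obstacle is verifying that the synthesised sequence $(t_i)$ really is an admissible expansion of $t$, i.e.\ that each $t_i$ lands in the alphabet $\Omega_{-m_2,m_1}$; this follows at once from $0\le c_i\le m_1$ and $0\le d_i\le m_2$, which force $-m_2\le t_i\le m_1$. I would also note that the standing hypothesis $q-1>m_1\ge m_2$ is not actually needed for the set identity itself, which holds for every $q>1$; it serves only to guarantee that $\Gamma_{q,m_1}$ and $\Gamma_{q,m_2}$ are genuine Cantor sets, the regime in which this decomposition is applied in the sequel.
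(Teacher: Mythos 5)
Your proof is correct and matches the paper's argument essentially verbatim: both directions rest on the same digit-wise correspondence $t_i=c_i-d_i$, with the two inclusions merely presented in the opposite order. Your added remark that the hypothesis $q-1>m_1\ge m_2$ is not needed for the identity itself is accurate; the paper's proof does not use it either.
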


\begin{proof}
If $x\in \Gamma_{q,m_1}\cap(\Gamma_{q,m_2}+t)$, then there exist a sequence $(c_i)\in\Omega_{m_1}^{\infty}$ satisfying $x=\sum_{i=1}^{\infty}\frac{c_i}{q^i}$ and a sequence $(d_i)\in\Omega_{m_2}^{\infty}$ satisfying $x-t=\sum_{i=1}^{\infty}\frac{d_i}{q^i}$.
Then setting $t_i:=c_i-d_i$ we get an expansion $(t_i)\in\Omega_{-m_2,m_1}$ of $t$ in base $q$, and $c_i\in \Omega_{m_1}\cap(\Omega_{m_2}+t_i)$ for all $i$ by the definition of $t_i$.

Conversely, if $x=\sum_{i=1}^{\infty}\frac{c_i}{q^i}$ for some sequence $(c_i)$ satisfying for every $i$ the condition $c_i\in \Omega_{m_1}\cap(\Omega_{m_2}+t_i)$ for some expansion $(t_i)\in\Omega_{-m_2,m_1}$ of $t$ in base $q$, then $x\in \Gamma_{q,m_1}$, and $\sum_{i=1}^{\infty}\frac{c_i-t_i}{q^i}$ is an expansion of $x-t$ in base $q$ over the alphabet $\Omega_{m_2}$, so that  $x\in \Gamma_{q,m_2}+t$.
\end{proof}

Now, following  \cite{WLDX}, we recall some results on generalized Moran sets \cite{H,HW,WLDX2}.
For each $j\in\NN$, let $H_j$ be a nonempty finite set of consecutive integers.
Set
\begin{equation*}
S^k:=\prod_{j=1}^kH_j\qtq{for all}k\in\NN,\qtq{and}S^*:=\cup_{k\ge1} S^k.
\end{equation*}
Let us also fix for each $j\in\NN$ and $i\in H_j$ a real number $r^j_i\in (0,1)$.

Given a nonempty compact set $J\subset \RR^d$ with $\overline{\mathrm{int}J}=J$,  assume that there exists a family $\set{J_{\sigma}\ :\ \sigma\in S^*}$
of nonempty compact subsets of $J$ having the following properties:

\begin{enumerate}[\upshape (a)]
\item $\overline{\mathrm{int}J_\sigma}=J_\sigma$ for every $\sigma\in S^*$;
\item for each $k\ge 1$ and $\sigma\in S^k$, the sets $J_{(\sigma,i)}$, $i\in H_{k+1}$ are non-overlapping subsets of $J_\sigma$, and their diameters  satisfy the following equalities:
\begin{equation*}
\abs{J_{(\sigma,i)}}=r_i^{k+1}\abs{J_\sigma}
\qtq{for all}
i\in H_{k+1}.
\end{equation*}
\end{enumerate}
Then
\begin{equation*}
E:=\bigcap_{k=1}^{\infty}\bigcup_{\sigma\in S^k}J_\sigma
\end{equation*}
is nonempty compact set, called a \emph{generalized Moran set}.

\begin{example}\label{e42}
If $t$ has a unique expansion $(t_j)$ over $\Omega_{-m_2,m_1}$, then
$\Gamma_{q,m_1} \cap (\Gamma_{q,m_2}+t)$ is a generalized Moran set.
Indeed, set $H_j:=\Omega_{m_1}\cap(\Omega_{m_2}+t_j)$ for all $j\in\NN$,  and $r^j_i:=1/q$ for all $j\in\NN$ and $i\in H_j$.
Furthermore, let us introduce the maps $f_i(x):=(x+i)/q$ for $i=0,\ldots,m_1$, and then the composed maps
\begin{equation*}
f_{\sigma}:=f_{\sigma_k}\circ\cdots\circ f_{\sigma_1}\qtq{for all}\sigma=(\sigma_1,\ldots,\sigma_k)\in S^k,\quad k\in\NN.
\end{equation*}
Finally, define $J:=[0,m_1/(q-1)]$, and $J_{\sigma}:=f_{\sigma}(J)$ for all $\sigma\in S^*$.
Then the conditions (a) and (b) are satisfied, so that
\begin{equation*}
\Gamma_{q,m_1} \cap (\Gamma_{q,m_2}+t)=\bigcap_{k=1}^{\infty}\bigcup_{\sigma\in S^k}J_\sigma
\end{equation*}
is a generalized Moran set.
\end{example}

We recall from the \cite{WLDX} the following result on the Hausdorff dimension $\dim_H E$ of a generalized Moran sets, proved in \cite{H,HW}:

\begin{theorem}\label{t43}
Consider a generalized Moran set, and assume the following three conditions:
\begin{enumerate}[\upshape (i)]
\item there exists a positive number $c$ such that for each $\sigma\in S^*$, $J_\sigma$ contains a ball of diameter $c\abs{J_\sigma}$;
\item $\inf_{j\in\NN}\min_{i\in H_j}r^j_i>0$;
\item $\lim_{k \to \infty}\sup_{(\sigma_1,\ldots,\sigma_k)\in S^k}\prod_{j=1}^kr^j_{\sigma_j}=0$.
\end{enumerate}
Then we may define a sequence $(s_k)$ of  real numbers by the formula
\begin{equation}\label{41}
\prod_{j=1}^k \left[\sum_{i\in H_j}(r_i^j)^{s_k}\right]=1,\quad k=1,2,\ldots,
\end{equation}
and the following equality holds:
\begin{equation*}
\dim_H E=\liminf_{k\to\infty}s_k.
\end{equation*}
\end{theorem}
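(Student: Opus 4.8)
\textit{Proof plan.} The plan is to prove the two inequalities $\dim_H E\le\liminf_k s_k$ and $\dim_H E\ge\liminf_k s_k$ separately, after first checking that each $s_k$ is well defined. For the latter, note that for fixed $k$ the function
\[
g_k(s):=\prod_{j=1}^k\left[\sum_{i\in H_j}(r_i^j)^s\right]
\]
is continuous and, since every $r_i^j\in(0,1)$ and each $H_j$ is nonempty, strictly decreasing from $+\infty$ (as $s\to-\infty$) to $0$ (as $s\to+\infty$); hence there is a unique real $s_k$ with $g_k(s_k)=1$, which is \eqref{41}. Throughout I would exploit the multiplicativity of diameters, $\abs{J_\sigma}=\abs{J}\prod_{j=1}^k r_{\sigma_j}^j$ for $\sigma=(\sigma_1,\ldots,\sigma_k)\in S^k$ (obtained by iterating condition (b)), together with the factorisation of a product summed over the product set $S^k=\prod_{j=1}^k H_j$:
\[
\sum_{\sigma\in S^k}\abs{J_\sigma}^s=\abs{J}^s\prod_{j=1}^k\left[\sum_{i\in H_j}(r_i^j)^s\right]=\abs{J}^s\,g_k(s).
\]

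For the upper bound I would use the natural covers. Fix $s>\liminf_k s_k$; then $s_k<s$ for infinitely many $k$, and since $g_k$ is decreasing this gives $g_k(s)<g_k(s_k)=1$ for those $k$, whence $\sum_{\sigma\in S^k}\abs{J_\sigma}^s<\abs{J}^s$. The family $\set{J_\sigma:\sigma\in S^k}$ covers $E$ and has mesh $\max_{\sigma\in S^k}\abs{J_\sigma}=\abs{J}\sup_\sigma\prod_j r_{\sigma_j}^j\to0$ by condition (iii). Hence along this subsequence the $\delta$-approximating Hausdorff sums stay bounded by $\abs{J}^s$ while $\delta\to0$, so $\mathcal H^s(E)\le\abs{J}^s<\infty$ and therefore $\dim_H E\le s$. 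Letting $s\downarrow\liminf_k s_k$ yields the upper bound; note this half uses only (iii).

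For the lower bound I would build a Frostman measure and invoke the mass distribution principle. Fix $s<\liminf_k s_k$, so that $s<s_k$ and hence $g_k(s)>1$ for all large $k$. Put normalised weights $p_i^j:=(r_i^j)^s/\sum_{i'\in H_j}(r_{i'}^j)^s$, which sum to $1$ over $i\in H_j$, and let $\mu$ be the resulting measure on $E$ determined by $\mu(J_\sigma)=\prod_{j=1}^k p_{\sigma_j}^j$ for $\sigma\in S^k$. A direct computation gives, once $g_k(s)\ge1$,
\[
\mu(J_\sigma)=\frac{\abs{J}^{-s}\abs{J_\sigma}^s}{g_k(s)}\le\abs{J}^{-s}\abs{J_\sigma}^s,
\]
i.e.\ a Frostman bound on cylinders.

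The main obstacle is to convert this cylinder bound into a bound for arbitrary balls, $\mu(B(x,r))\le C r^s$, and this is exactly where conditions (i) and (ii) enter. Given a small $r$ and a point $x\in E$, condition (ii) (the uniform lower bound on the ratios $r_i^j$) guarantees that consecutive cylinder diameters along the branch through $x$ do not drop too abruptly, so one can select a level $k$ at which the cylinders meeting $x$ have diameter comparable to $r$; condition (i) (each $J_\sigma$ contains a ball of diameter $c\abs{J_\sigma}$), together with the non-overlapping property in (b), gives a packing estimate showing that $B(x,r)$ can meet only a bounded number $K=K(c,d)$ of such level-$k$ cylinders. Summing the cylinder estimate over these at most $K$ cylinders yields $\mu(B(x,r))\le C r^s$, and the mass distribution principle then gives $\mathcal H^s(E)\ge\mu(E)/C=1/C>0$, so $\dim_H E\ge s$; letting $s\uparrow\liminf_k s_k$ completes the proof. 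The delicate points are the simultaneous control of the ball radius against the possibly non-constant cylinder scales and the uniformity of the overlap constant $K$ across scales, both of which hinge on (i) and (ii), while (iii) ensures that $E$ is a genuine limit set and that the upper-bound covers refine to arbitrarily small mesh.
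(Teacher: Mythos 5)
First, a point of reference: the paper itself gives no proof of Theorem \ref{t43} --- it is recalled from \cite{WLDX} and attributed to \cite{H,HW} --- so your attempt can only be judged on its own merits, not against an in-paper argument. Your overall strategy (monotonicity of $g_k$ to define $s_k$, natural covers for the upper bound, a Frostman measure plus the mass distribution principle for the lower bound) is the standard route, and two of your three pieces are sound: the well-definedness of $s_k$ is correct, and the upper-bound half is complete as written --- it indeed uses only condition (iii), via $\mathcal{H}^s_\delta(E)\le\sum_{\sigma\in S^k}\abs{J_\sigma}^s=\abs{J}^s g_k(s)<\abs{J}^s$ along the infinitely many $k$ with $s_k<s$.

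The genuine gap is in the ball-to-cylinder step of the lower bound. You fix a single level $k$, chosen from the branch through $x$, and claim $B(x,r)$ meets at most $K(c,d)$ level-$k$ cylinders. This fails in general: the ratios $r_i^j$ may vary with $i$ inside a single $H_j$, so cylinders of the \emph{same} level $k$ need not have comparable diameters --- condition (ii) bounds all ratios below by $\rho:=\inf_j\min_{i\in H_j}r^j_i$ but gives no comparability from above, and two level-$k$ diameters can differ by a factor on the order of $\rho^{k}$. Consequently a level-$k$ cylinder meeting $B(x,r)$ can be far smaller than $r$; it then only contains a ball far smaller than $r$, the volume-comparison count of such cylinders is unbounded, and summing your cylinder bound $\mu(J_\sigma)\le\abs{J}^{-s}\abs{J_\sigma}^s$ over them does not yield $\lesssim r^s$. (In the paper's application, Lemma \ref{l44}, all ratios equal $1/q$, so a fixed level would suffice there; but the theorem is stated for general $r^j_i$.) The standard repair is a stopping-time (antichain) argument instead of a fixed level: along each branch stop at the \emph{first} level where the cylinder diameter is $\le r$. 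By (ii) every stopping cylinder has diameter in $(\rho r,r]$; no stopping cylinder is an ancestor of another, so by property (b) they have pairwise disjoint interiors and they cover $E$; those meeting $B(x,r)$ lie in $B(x,2r)$ and each contains, by (i), a ball of diameter $\ge c\rho r$, so their number is at most $K=(4/(c\rho))^d$ --- note $K$ must depend on $\rho$ as well, not only on $c$ and $d$. Since for small $r$ every stopping level exceeds any fixed $k_0$ (because $\abs{J}\rho^k\le r$ forces $k\to\infty$ as $r\to 0$), your Frostman bound applies to all stopping cylinders and gives $\mu(B(x,r))\le K\abs{J}^{-s}r^s$; one should also phrase that bound for the symbolic weights $\prod_j p^j_{\sigma_j}$ pushed forward to $E$, so that boundary overlaps of cylinders cannot inflate $\mu(B(x,r))$. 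With these corrections the mass distribution principle gives $\dim_H E\ge s$ for every $s<\liminf_k s_k$, completing the proof.
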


As an application of Theorem \ref{t43}, we have the following result:

\begin{lemma}\label{l44}
Assume that  $q-1>m_1\ge m_2$,  let
$(t_j)\in\uu_{m_1,m_2}'(q)$, and set
\begin{equation}\label{42}
n_j=
\begin{cases}
m_2+1+t_j&\text{if\quad $t_j<0$,}\\
m_2+1&\text{if\quad $0\le t_j\le m_1-m_2$,}\\
m_1+1-t_j&\text{if\quad $t_j>m_1-m_2$.}
\end{cases}
\end{equation}
Then
\begin{equation*}
\dim_H(\Gamma_{q,m_1} \cap (\Gamma_{q,m_2}+t))=\liminf_{k\to\infty}\frac{\sum_{j=1}^k\log n_j}{k\log q}.
\end{equation*}
\end{lemma}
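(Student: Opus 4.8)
The plan is to identify $\Gamma_{q,m_1}\cap(\Gamma_{q,m_2}+t)$ as a generalized Moran set and then read off its Hausdorff dimension from Theorem \ref{t43}. By Example \ref{e42}, since $t$ has the unique expansion $(t_j)$ over $\Omega_{-m_2,m_1}$, the intersection equals $\bigcap_{k\ge1}\bigcup_{\sigma\in S^k}J_\sigma$ with the choices $H_j=\Omega_{m_1}\cap(\Omega_{m_2}+t_j)$ and $r^j_i=1/q$ for every $j$ and every $i\in H_j$. Thus all contraction ratios coincide, and the only data that vary with $j$ are the cardinalities $\abs{H_j}$.

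The first step is the purely combinatorial computation $\abs{H_j}=n_j$. Writing $\Omega_{m_1}=\set{0,\ldots,m_1}$ and $\Omega_{m_2}+t_j=\set{t_j,\ldots,t_j+m_2}$, the intersection is the block of consecutive integers from $\max(0,t_j)$ to $\min(m_1,t_j+m_2)$. Using the admissibility bounds $-m_2\le t_j\le m_1$ and the hypothesis $m_1\ge m_2$, a short case distinction according to the sign of $t_j$ and to whether $t_j$ exceeds $m_1-m_2$ recovers exactly the three cases of \eqref{42}; one also checks that the formulas agree at the boundary values $t_j=0$ and $t_j=m_1-m_2$, and that $n_j\ge1$ throughout, so that every $H_j$ is nonempty.

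The second step is to verify the three hypotheses of Theorem \ref{t43}. Here $d=1$ and each $J_\sigma=f_\sigma(J)$ is a nondegenerate subinterval of $J=[0,m_1/(q-1)]$, so it contains a ball, namely itself, of diameter $\abs{J_\sigma}$; hence (i) holds with $c=1$. Since every ratio equals $1/q$, condition (ii) gives $\inf_j\min_{i\in H_j}r^j_i=1/q>0$, and condition (iii) reduces to $\sup_\sigma\prod_{j=1}^k r^j_{\sigma_j}=q^{-k}\to0$ as $k\to\infty$, which holds because $q>1$.

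Finally, with $r^j_i=1/q$ and $\abs{H_j}=n_j$ the defining relation \eqref{41} becomes $\prod_{j=1}^k(n_jq^{-s_k})=1$, that is $q^{ks_k}=\prod_{j=1}^kn_j$, whence
\begin{equation*}
s_k=\frac{\sum_{j=1}^k\log n_j}{k\log q}.
\end{equation*}
Theorem \ref{t43} then yields $\dim_H E=\liminf_{k\to\infty}s_k$, which is the claimed formula. The computation is entirely routine once Example \ref{e42} and Theorem \ref{t43} are in hand; the only point requiring care is the case analysis establishing $\abs{H_j}=n_j$, where the admissibility range $-m_2\le t_j\le m_1$ of a genuine expansion must be invoked.
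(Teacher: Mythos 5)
Your proof is correct and follows essentially the same route as the paper's: both identify the intersection with the generalized Moran set of Example \ref{e42}, compute $\abs{H_j}=\min\set{m_1,m_2+t_j}-\max\set{0,t_j}+1=n_j$ by the same case analysis, and read off $s_k=\frac{\sum_{j=1}^k\log n_j}{k\log q}$ from \eqref{41} via Theorem \ref{t43}. Your explicit verification of hypotheses (i)--(iii) of Theorem \ref{t43} (ball condition, uniform lower bound $1/q$ on the ratios, and $q^{-k}\to 0$) is a small but welcome addition that the paper's proof leaves implicit.
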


\begin{proof}
Consider the generalized Moran set of Example \ref{e42}.
We claim that the number of elements  $n_j$ of each set $H_j$ is given by the formula \eqref{42}.
Indeed, the proof of Lemma \ref{l41} shows that every $x\in \Gamma_{q,m_1}\cap(\Gamma_{q,m_2}+t)$ has a unique expansion $(x_j)$ in base $q$ over the alphabet $\Omega_{m_1}$, and $(x_j)$ is characterized by the conditions  $x_j\in\set{0,1,\ldots,m_1}\cap(\set{0,1,\ldots,m_2}+t_j)$ for all $j$.
They are equivalent to the inequalities
\begin{equation*}
0\le x_j\le m_1
\qtq{and}
0\le x_j-t_j\le m_2
\end{equation*}
or to
\begin{equation*}
\max\set{0,t_j}\le x_j\le \min\set{m_1,m_2+t_j},
\end{equation*}
so that
\begin{equation*}
n_j=\min\set{m_1,m_2+t_j}-\max\set{0,t_j}+1.
\end{equation*}
Since $m_1\ge m_2$ by our assumption, this is equivalent to \eqref{42}.

Applying Theorem \ref{t43} it suffices to
observe that the definition \eqref{41}
 of the exponents $s_k$ reduces in our case to
\begin{equation*}
\prod_{j=1}^k \left(n_jq^{-s_k}\right)=1,\end{equation*}
and this is equivalent to
\begin{equation*}
s_k=\frac{\sum_{j=1}^k\log n_j}{k\log q}.\qedhere
\end{equation*}
\end{proof}

\section{Proof of Theorem \ref{t11}}\label{s5}

Let us restate the theorem for convenience:

\begin{t11}
Let $q>1$.
Then:
\begin{enumerate}[\upshape (i)]
\item If $q-1\le m_2\le m_1$, then
\begin{equation*}
\dd_{m_1,m_2}(q)=\set{0,1}.
\end{equation*}
\item If $m_2<q-1\le m_1$, then
\begin{equation*}
\dd_{m_1,m_2}(q)=\set{0,\frac{\log (m_2+1)}{\log q}}.
\end{equation*}
\item If $m_2<m_1<q-1$, then
$\dd_{m_1,m_2}(q)$ contains the point $\frac{\log (m_2+1)}{\log q}$.\end{enumerate}
\end{t11}

We need a lemma.
First we observe that by \eqref{12},  \eqref{13} and \eqref{33} the Komornik--Loreti constant $q_{KL}$ of the alphabet $\set{-m_2,\ldots,m_1}$ is given by the formula
\begin{equation}\label{51}
(t_j)=
\begin{cases}
\left(\frac{m_1-m_2-1}{2}+\tau_j\right)_{j=1}^{\infty}
&\text{if $m_1+m_2$ is odd,}\\
\left(\frac{m_1-m_2}{2}+\tau_j-\tau_{j-1}\right)_{j=1}^{\infty}
&\text{if $m_1+m_2$ is even.}\\
\end{cases}
\end{equation}

\begin{lemma}\label{l51}
If $m_1>m_2$, then $m_2+1<q_{KL}<m_1+1$.
\end{lemma}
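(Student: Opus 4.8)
The plan is to work with the standard alphabet $\{0,\ldots,M\}$ where $M=m_1+m_2$, since translating an alphabet by a constant is a bijection on expansions and hence leaves the Komornik--Loreti constant unchanged; thus $q_{KL}$ is exactly the base whose unique expansion of $1$ is the sequence $(p_i)$ of \eqref{33}. I would set $F(q):=\sum_{i=1}^{\infty}p_iq^{-i}$, a strictly decreasing function of $q$ on $(1,\infty)$ (the digits $p_i$ are fixed, nonnegative, and not all zero), so that $q_{KL}$ is characterized by $F(q_{KL})=1$. The whole argument then reduces to evaluating $F$ at two integer points and comparing with $1$: to obtain $q_{KL}>a$ it suffices to show $F(a)>1$, and to obtain $q_{KL}<b$ it suffices to show $F(b)<1$.

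Next I would split according to the parity of $M$ and read off the range of the digits from \eqref{34}. If $M=2m-1$ is odd, then $p_i\in\{m-1,m\}$ and the sequence is nonconstant (it attains both values). Comparing $F$ term by term with the geometric series having all digits equal to $m-1$, respectively equal to $m$, gives $F(m)>1$ and $F(m+1)<1$, whence $m<q_{KL}<m+1$; both inequalities are strict precisely because $(p_i)$ is identically neither $m-1$ nor $m$. If $M=2m$ is even, then $p_i\in\{m-1,m,m+1\}$, and the same comparison against the geometric series with digit $m-1$, respectively $m+1$, yields $F(m)>1$ and $F(m+2)<1$, so that $m<q_{KL}<m+2$, again strictly by nonconstancy.

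Finally I would convert these integer bounds into the claimed ones using only the elementary arithmetic forced by $m_1>m_2$ together with the parity-dependent definition of $m$. In the odd case $m_1+m_2=2m-1$ with $m_1>m_2$ gives $m_1\ge m$ and $m_2\le m-1$, so $m_2+1\le m<q_{KL}<m+1\le m_1+1$. In the even case $m_1+m_2=2m$ with $m_1>m_2$ gives $m_1\ge m+1$ and $m_2\le m-1$, so $m_2+1\le m<q_{KL}<m+2\le m_1+1$. In both cases this is exactly $m_2+1<q_{KL}<m_1+1$.

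I expect the only delicate points to be bookkeeping rather than anything deep: one must verify that $(p_i)$ genuinely attains its extreme values (so the comparisons with the geometric series are strict, e.g.\ $F(m)=1$ would force every $p_i=m-1$, which fails), and one must keep the parity-dependent $m$ consistent with \eqref{33} and with the integer inequalities coming from $m_1>m_2$. Working from \eqref{33} rather than from the shifted form \eqref{51} is what keeps the estimates clean, since \eqref{51} expands the awkward point $1-m_2/(q_{KL}-1)$. As a sanity check, the smallest admissible case $m_1=2,\,m_2=1$ (so $M=3$, $m=2$, $p_i\in\{1,2\}$) can be verified directly and already shows the bounds are tight.
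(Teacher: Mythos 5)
Your proof is correct, and it diverges from the paper's in one substantive way. The upper bounds coincide: the paper also shows $q_{KL}<m+1$ (odd case) and $q_{KL}<m+2$ (even case) by comparing $\sum_j p_j q^{-j}$ at the integer base against the geometric series with the maximal digit, exactly as you do, and the concluding arithmetic from $m_1>m_2$ is the same. The difference is the lower bound: the paper does not compare $F(m)$ with $1$; instead it invokes the known inequality $q_{KL}>q_1$ together with Baker's explicit formulas \eqref{31} for the generalized golden ratio, getting $q_{KL}>q_1=(m+\sqrt{m^2+4m})/2>m$ in the odd case and $q_{KL}>q_1=m+1$ in the even case. Your route proves the lower bound by the same two-sided series comparison used for the upper bound, so the whole lemma rests only on \eqref{33}, \eqref{34}, strict monotonicity of $q\mapsto\sum_i p_iq^{-i}$, and the fact that $(p_i)$ is not eventually constant at its extreme digit values (which you rightly flag and which holds since, e.g., $p_1$ and a nearby digit already differ). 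This makes your argument more self-contained and uniform across the two parities; what the paper's version buys is brevity given the background already recalled in Section \ref{s3}, plus a sharper lower bound $q_{KL}>m+1$ in the even case --- though, as your arithmetic shows, the weaker bound $q_{KL}>m$ is all that is needed, since $m_1>m_2$ forces $m_2+1\le m$ in both parities.
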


\begin{proof}
If $m_1+m_2$ is odd, say $m_1+m_2=2m-1$, then $m<q_{KL}<m+1$ because
\begin{equation*}
q_{KL}>q_1=\frac{m+\sqrt{m^2+4m}}{2}>m
\end{equation*}
and
\begin{equation*}
\sum_{j=1}^{\infty}\frac{m-1+\tau_j}{(m+1)^j}
<\sum_{j=1}^{\infty}\frac{m}{(m+1)^j}
=1
=\sum_{j=1}^{\infty}\frac{m-1+\tau_j}{q_{KL}^j}.
\end{equation*}
Since $m_2+1\le m\le m_1$,  the inequalities $m_2+1<q_{KL}<m_1+1$ follow.

If $m_1+m_2$ is even, say $m_1+m_2=2m$, then $m<q_{KL}<m+2$ because
\begin{equation*}
q_{KL}>q_1=m+1
\end{equation*}
and
\begin{equation*}
\sum_{j=1}^{\infty}\frac{m+\tau_j-\tau_{j-1}}{(m+2)^j}
<\sum_{j=1}^{\infty}\frac{m+1}{(m+2)^j}
=1
=\sum_{j=1}^{\infty}\frac{m+\tau_j-\tau_{j-1}}{q_{KL}^j}.
\end{equation*}
Since $m_1>m_2$, we have $m_2+1\le m\le m_1-1$,  the inequalities $m_2+1<q_{KL}<m_1+1$ follow again.

\end{proof}

\begin{proof}[Proof of Theorem \ref{t11}]
(i) If $1<q\le m_2+1$, then both $\Gamma_{q,m_2}$ and $\Gamma_{q,m_1}$  are intervals.
Therefore $\Gamma_{q,m_1} \cap (\Gamma_{q,m_2}+t)$ is also an interval, and hence it has either dimension zero or dimension one.
\medskip

(ii) If $m_2+1<q\le m_1+1$, then $\Gamma_{q,m_2}$ is a homogeneous Cantor set of dimension
\begin{equation*}
d:=\frac{\log (m_2+1)}{\log q}.
\end{equation*}
Therefore, similarly to the case (i),  $\Gamma_{q,m_1} \cap (\Gamma_{q,m_2}+t)$ has either dimension zero or $d$.
\medskip

(iii)
We are going to apply Lemma \ref{l44}.
Since $m_2<m_1<q-1$ by assumption,
we have $q>m_1+1>q_{KL}$ by Lemma \ref{l51}, and therefore the sequence $(t_j)$ defined by \eqref{51} belongs to $\uu_{m_1,m_2}'(q)$.

If  $m_1+m_2$ is odd, then
\begin{equation*}
t_j=\frac{m_1-m_2-1+2\tau_j}{2}
\end{equation*}
for all $j$.
Since
\begin{equation*}
0\le\frac{m_1-m_2+i}{2}\le m_1-m_2\qtq{for} i=-1,1,
\end{equation*}
we have $n_j=m_2+1$ for all $j$ in formula \eqref{42} of Lemma \ref{l44}, and hence $\frac{\log (m_2+1)}{\log q}$ belongs to $\dd_{m_1,m_2}(q)$.

If  $m_1+m_2$ is even, then
\begin{equation*}
t_j=\frac{m_1-m_2}{2}+\tau_j-\tau_{j-1}\end{equation*}
for all $j$.
Since
\begin{equation*}
0\le \frac{m_1-m_2}{2}+i\le m_1-m_2\qtq{for} i=-1,0,1,
\end{equation*}
we have $n_j=m_2+1$ for all $j$ in formula \eqref{42} of Lemma \ref{l44} again, and hence $\frac{\log (m_2+1)}{\log q}\in\dd_{m_1,m_2}(q)$.
\end{proof}

\section{Proof of Theorem \ref{t12}}\label{s6}

We restate the theorem again:

\begin{t12}
Assume that  $m_1=m_2=m$.
Then:
\begin{enumerate}[\upshape (i)]
\item If $q_{KL}<q<\infty$, then $\dd_{m,m}(q)$ contains an interval.
\item If $q=q_{KL}$,  then $\dd_{m,m}(q)$ is formed by the numbers
\begin{align*}
&0,\quad \frac{\log (m+1)}{\log q},
\quad \frac{\log [m^2(m+1)]}{3\log q},\intertext{and}
&\frac{\log m}{\log q}-\frac{\log \frac{m+1}{m}}{\log q}\sum_{i=1}^j\left(-\frac{1}{2}\right)^i
\qtq{for}1\le j<\infty.
\end{align*}
\item If  $q_k<q\le q_{k+1}$ for some $k\ge 1$, then $\dd_{m,m}(q)$ is formed by the numbers
\begin{equation*}
0,\quad \frac{\log (m+1)}{\log q},\qtq{and}
\frac{\log m}{\log q}
-\frac{\log \frac{m+1}{m}}{\log q}\sum_{i=1}^j\left(-\frac{1}{2}\right)^i
\qtq{for $1\le j<k$ if $k\ge 2$.}
\end{equation*}
More precisely, if $(m+t_i)$ ends with with $(\omega_j\overline{\omega_j})^{\infty}$ for some $j\ge1$, then
\begin{equation*}
\dim_H(\Gamma_{q,m} \cap (\Gamma_{q,m}+t))
=\frac{\log m}{\log q}
-\frac{\log \frac{m+1}{m}}{\log q}\sum_{i=1}^j\left(-\frac{1}{2}\right)^i.
\end{equation*}
\end{enumerate}
\end{t12}

We need some preliminary results.
Let us consider the more general case where $m_1+m_2=2m$ is even, and set $\mu:=(m_1-m_2)/2$ for brevity.
In view of the relation $\Omega_{-m_2,m_1}+m_2=\Omega_{2m}$ we introduce the following sequence over the alphabet $\Omega_{-m_2,m_1}=\set{-m_2,\ldots,m_1}$, related to the Thue--Morse sequence (see \eqref{33}):
\begin{equation}\label{61}
(\lambda_i):=\left(\mu+\tau_i-\tau_{i-1}\right)_{i=1}^{\infty}\in\set{\mu -1,\mu ,\mu +1}^{\NN}.
\end{equation}

Given an arbitrary word $t_1\cdots t_n\in\set{\mu -1,\mu,\mu+1}^n$ of length $n\ge 1$, we introduce the  fractions
\begin{align*}
&d_1^*(t_1\cdots t_n):=\frac{\#\set{1\le i\le n: t_i=\mu -1}}{n},\\
&d_2^*(t_1\cdots t_n):=\frac{\#\set{1\le i\le n: t_i=\mu }}{n},\\
&d_3^*(t_1\cdots t_n):=\frac{\#\set{1\le i\le n: t_i=\mu +1}}{n}.
\end{align*}
Furthermore, given an arbitrary sequence $(t_i)\in\set{\mu -1,\mu,\mu+1}^{\NN}$, we define the \emph{densities}
\begin{equation*}
d_j((t_i)):=\lim_{n\to\infty}d_j^*(t_1\cdots t_n),\quad j=1,2,3,
\end{equation*}
when these limits exist.

If $(t_i)=(\lambda_i)$, then by \cite[Lemmas 3.1, 3.2]{BD} we have the following formulas:

\begin{lemma}\label{l61}
Let $m_1+m_2$ be even and consider the sequence $(\lambda_i)$ is defined by \eqref{61}.
Then:
\begin{align}
&d_2^*(\lambda_1 \cdots \lambda_{2^n})=-\sum_{i=1}^n\left(\frac{-1}{2}\right)^i
\qtq{for all}n \in \NN,\tag{i}\\
&d_1((\lambda_i))
=d_2((\lambda_i))
=d_3((\lambda_i))
=\frac{1}{3}.\tag{ii}
\end{align}
\end{lemma}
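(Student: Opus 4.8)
Lemma \ref{l61} concerns the sequence $(\lambda_i) = (\mu + \tau_i - \tau_{i-1})$ where $(\tau_i)$ is the Thue–Morse sequence. Let me think about how to prove the two parts.

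The sequence $\lambda_i = \mu + \tau_i - \tau_{i-1}$ takes values in $\{\mu-1, \mu, \mu+1\}$:
- $\lambda_i = \mu - 1$ iff $\tau_i - \tau_{i-1} = -1$, i.e., $\tau_{i-1}\tau_i = 10$
- $\lambda_i = \mu$ iff $\tau_i - \tau_{i-1} = 0$, i.e., $\tau_{i-1}\tau_i \in \{00, 11\}$
- $\lambda_i = \mu + 1$ iff $\tau_i - \tau_{i-1} = 1$, i.e., $\tau_{i-1}\tau_i = 01$

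So $d_j$ counts densities of these word patterns in the Thue–Morse sequence.

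**Part (ii)** follows immediately from Theorem \ref{t14} / Example \ref{e24}:
- $d_1$ = density of $10$ = $1/3$
- $d_3$ = density of $01$ = $1/3$
- $d_2$ = density of $\{00, 11\}$, and since all densities sum to $1$, $d_2 = 1 - 1/3 - 1/3 = 1/3$.

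**Part (i)** is the more delicate finite-version formula:
$$d_2^*(\lambda_1 \cdots \lambda_{2^n}) = -\sum_{i=1}^n \left(-\frac{1}{2}\right)^i.$$

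Let me denote $a_n = \#\{1 \le i \le 2^n : \lambda_i = \mu\}$ = number of $i$ in $[1, 2^n]$ with $\tau_{i-1} = \tau_i$. This counts positions $i \in \{1, \dots, 2^n\}$ where consecutive Thue–Morse digits agree.

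Equivalently, among $\tau_0 \tau_1 \cdots \tau_{2^n}$ (which has $2^n + 1$ digits and $2^n$ consecutive pairs), $a_n$ = number of "agreeing" adjacent pairs.

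The plan: set up a recursion for $a_n$ using the self-similar/reflection structure $\tau_0 \cdots \tau_{2^{n+1}-1} = (\tau_0 \cdots \tau_{2^n-1})\overline{(\tau_0 \cdots \tau_{2^n-1})}$.

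=== MY PROOF PROPOSAL ===

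<br>

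The plan is to treat the two parts separately: part (ii) will follow almost immediately from Theorem \ref{t14} (via Example \ref{e24}), while part (i) requires a direct recursive computation exploiting the reflection structure of the Thue--Morse sequence.

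First I would translate the quantities $d_j$ into block frequencies in $(\tau_i)$. Since $\lambda_i=\mu+\tau_i-\tau_{i-1}$ and $\tau_i\in\{0,1\}$, we have $\lambda_i=\mu-1$ exactly when $\tau_{i-1}\tau_i=10$, $\lambda_i=\mu+1$ exactly when $\tau_{i-1}\tau_i=01$, and $\lambda_i=\mu$ exactly when $\tau_{i-1}\tau_i\in\{00,11\}$. Thus $d_1$, $d_3$, $d_2$ are respectively the densities of the blocks $10$, $01$, and $\{00,11\}$ in $(\tau_i)$. Part (ii) then follows at once: Example \ref{e24} gives $d(01)=d(10)=\tfrac13$, and since the three densities sum to $1$ we get $d_2=1-\tfrac13-\tfrac13=\tfrac13$.

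For part (i) I would argue recursively. Writing $w_n:=\tau_0\cdots\tau_{2^n-1}$ and $a_n:=\#\{1\le i\le 2^n:\tau_{i-1}=\tau_i\}$ (so that $d_2^*(\lambda_1\cdots\lambda_{2^n})=a_n/2^n$), the key identity is $w_{n+1}=w_n\overline{w_n}$, which means the length-$2^{n+1}$ prefix of $(\tau_i)$ splits into a copy of $w_n$ followed by its reflection. The agreeing adjacent pairs among $\tau_0\cdots\tau_{2^{n+1}}$ fall into three groups: those lying inside the first block $w_n$, those lying inside the reflected block $\overline{w_n}$, and the single junction pair $\tau_{2^n-1}\tau_{2^n}$. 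Reflection preserves the agree/disagree relation of adjacent letters, so the first two groups each contribute a quantity governed by $a_n$; the subtlety is a boundary correction, since counting pairs up to index $2^{n+1}$ involves the junction letter and the final letter $\tau_{2^{n+1}}$. The hard part will be bookkeeping this boundary term correctly to obtain a clean recursion; I expect something of the form $a_{n+1}=2a_n+(\text{small correction})$, where the correction tracks whether the junction pair $\tau_{2^n-1}\tau_{2^n}$ agrees.

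To pin down the junction behaviour I would use the explicit values $\tau_{2^n-1}$ and $\tau_{2^n}$ supplied by the defining recursion: the reflection rule gives $\tau_{2^n}=1-\tau_0=1$ and the last letter of $w_n$ alternates in a predictable way with $n$, producing the sign pattern $(-1)^n$ seen in the target formula. Once the recursion $a_{n+1}=2a_n+\varepsilon_n$ with the appropriate $\varepsilon_n\in\{-1,0,1\}$ is established, dividing by $2^{n+1}$ and iterating yields a telescoping geometric sum; I would then verify by induction that $a_n/2^n=-\sum_{i=1}^n(-1/2)^i$, checking the base case $n=1$ (where $\tau_0\tau_1=01$ disagrees, giving $a_1=0$ and indeed $-(-1/2)=1/2$, consistent once the off-by-one in the index range is handled) and confirming the inductive step reproduces the alternating-sign partial sum. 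This matches the stated closed form and completes part (i); letting $n\to\infty$ recovers $d_2=\tfrac13$ as an independent check against part (ii).
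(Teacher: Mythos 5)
Your part (ii) is exactly the paper's argument: the authors likewise reduce the densities of $\mu-1,\mu,\mu+1$ in $(\lambda_i)$ to the densities of $-1,0,1$ in $(\tau_i-\tau_{i-1})$, i.e.\ to the block frequencies of $10$ and $01$ in the Thue--Morse sequence, and invoke Example \ref{e24} (hence Theorem \ref{t14}), obtaining the third density by complementation. For part (i), however, the paper gives no proof at all: the identity is simply quoted from \cite[Lemmas 3.1, 3.2]{BD}, and only the limit relations (ii) receive a ``new proof''. So your recursion is a genuinely different, self-contained route, and it does go through. Writing $w_n:=\tau_0\cdots\tau_{2^n-1}$ and $a_n:=\#\{1\le i\le 2^n:\tau_{i-1}=\tau_i\}$, the pairs counted by $a_{n+1}$ split into those interior to $w_n$, those interior to $\overline{w_n}$ (equinumerous, since reflection preserves agreement of adjacent letters), the middle junction $(\tau_{2^n-1},\tau_{2^n})$, and the final pair $(\tau_{2^{n+1}-1},\tau_{2^{n+1}})$, which lies one letter beyond $w_{n+1}$. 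This yields
\begin{equation*}
a_{n+1}=2a_n-\varepsilon_n+\varepsilon_{n+1},
\qquad
\varepsilon_k:=
\begin{cases}
1&\text{if } \tau_{2^k-1}=\tau_{2^k},\\
0&\text{otherwise.}
\end{cases}
\end{equation*}
Since $\tau_{2^k}=1$ for every $k$, while $\tau_{2^k-1}$ equals $1$ for $k$ odd and $0$ for $k$ even (the last letter of $\overline{w_k}$ is the reflection of the last letter of $w_k$), we get $\varepsilon_k=1$ exactly for $k$ odd, hence $a_{n+1}=2a_n+(-1)^n$, which is the recursion you predicted; the closed form $a_n=\sum_{i=1}^n(-1)^{i+1}2^{n-i}=-2^n\sum_{i=1}^n(-1/2)^i$ satisfies it, so induction finishes the proof. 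Note that there are two boundary pairs, not one, but their net contribution is precisely your $(-1)^n$.

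One concrete slip to fix: your base case is wrong as stated. Since $a_1$ counts the positions $i\in\{1,2\}$, i.e.\ the two pairs $(\tau_0,\tau_1)=(0,1)$ and $(\tau_1,\tau_2)=(1,1)$, we have $a_1=1$, not $0$, and then $d_2^*(\lambda_1\lambda_2)=1/2=-(-1/2)$ holds on the nose --- there is no residual off-by-one to wave away. The point you glossed over (the pair at $i=2^n$ reaches one letter past $w_n$) is the same phenomenon that forces the final pair $(\tau_{2^{n+1}-1},\tau_{2^{n+1}})$ into the recursion; counting only pairs internal to $w_{n+1}$ would produce a wrong correction term.
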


Let us give a new proof for the  limit relations:

\begin{proof}[Proof of Lemma \ref{l61} (ii)]
It follows from \eqref{61} that for each $j=-1,0,1$, the density of $\mu +j$ in the sequence $(\lambda_i)$ is equal to the density of $j$ in the sequence $(\tau_i-\tau_{i-1})$.
Therefore the limit relations follow from Example \ref{e24}.
\end{proof}

Let us introduce the notation
\begin{equation}\label{62}
a_n=\mu \lambda_1\cdots\lambda_{2^n-1}\qtq{and}
b_n=\left(\mu -1\right)\lambda_1\cdots\lambda_{2^n-1},
\end{equation}
and their reflection $\overline{a_n},\overline{b_n}$ where each digit $c_i$ is replaced by $2\mu-c_i$.
The following result is proved in \cite[p. 2829]{DLD}:

\begin{lemma}\label{l62}
Let $m_1+m_2$ be even and consider the sequence $(\lambda_i)$ is defined by \eqref{61}.
If $q>q_{KL}$, then there exists an $n \in \NN$ such that $\uu_{m_1,m_2}'(q)$ contains a subshift of finite type over the alphabet $\set{a_n,b_n,\overline{a_n},\overline{b_n}}$ with the adjacency matrix
\begin{equation*}
A=
\begin{pmatrix}
0 & 1 & 1 & 0   \\
0 & 0 & 1 & 0  \\
1 & 0 & 0 & 1  \\
1 & 0 & 0& 0
\end{pmatrix}
.
\end{equation*}
\end{lemma}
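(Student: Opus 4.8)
The plan is to verify, through the lexicographic description of unique expansions, that every digit sequence produced by the proposed subshift is itself a unique expansion in base $q$. I would first translate everything to the standard alphabet: adding $m_2$ turns $\Omega_{-m_2,m_1}$ into $\set{0,\dots,2m}$ and, by \eqref{61} and \eqref{33}, turns $(\lambda_i)$ into the Thue--Morse expansion $(p_i)$ of $1$ in base $q_{KL}$. Under this shift the four letters become $a_n=m\,p_1\cdots p_{2^n-1}$, $b_n=(m-1)\,p_1\cdots p_{2^n-1}$ together with their reflections (the involution $c\mapsto 2\mu-c$ becoming $c\mapsto 2m-c$), so that $a_n$ and $b_n$ share the suffix $p_1\cdots p_{2^n-1}$, a prefix of $(p_i)$, and differ only in their first digit, while $\overline{a_n},\overline{b_n}$ share $\overline{p_1\cdots p_{2^n-1}}$. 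I then recall from Section~\ref{s3} the characterization of $\uu_{2m,q}'$: if $(\alpha_i)$ denotes the quasi-greedy expansion of $1$ in base $q$ and $\sigma$ the shift, a sequence $(c_i)\in\set{0,\dots,2m}^{\NN}$ is a unique expansion precisely when $\sigma^k(c_i)\prec(\alpha_i)$ for every $k$ with $c_k<2m$ and $\overline{\sigma^k(c_i)}\prec(\alpha_i)$ for every $k$ with $c_k>0$.

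Two reductions organize the argument. First, the involution $x\mapsto\overline x$ interchanges $a_n\leftrightarrow\overline{a_n}$ and $b_n\leftrightarrow\overline{b_n}$ and leaves the matrix $A$ invariant (it conjugates $A$ to itself), while it swaps the two families of inequalities above; since the set of $A$-admissible concatenations is thus closed under reflection, it is enough to establish the upper inequalities $\sigma^k(c)\prec(\alpha_i)$. Second, because $q>q_{KL}$ the quasi-greedy expansion satisfies $(\alpha_i)\succ(p_i)$, so there is a least index $L$ with $\alpha_L>p_L$, while $\alpha_i=p_i$ for $i<L$. I would fix the integer $n$ of the statement by requiring $2^n\ge L$; this is the only place the freedom in choosing $n$ is used, and it supplies the slack that converts agreement with $(p_i)$ near a block boundary into a strict inequality against $(\alpha_i)$.

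The core is then the combinatorial claim: for an arbitrary $A$-admissible concatenation $c=s_1s_2\cdots$ of the four letters and every position $k$ with $c_k<2m$, the tail $\sigma^k(c)$ does not exceed $(p_i)$ within its first $L$ digits; more precisely, comparing $\sigma^k(c)$ with $(p_i)$, the first discrepancy occurs at an index $\ge 2^n\ge L$ or else is a strict decrease. Granting this, $\sigma^k(c)$ agrees with $(\alpha_i)=(p_i)$ up to index $L-1$ and satisfies $(\sigma^k c)_L\le p_L<\alpha_L$ (or has already fallen strictly below), so $\sigma^k(c)\prec(\alpha_i)$; with the reflection symmetry this gives $c\in\uu_{2m,q}'$, and subtracting $m_2$ places the whole subshift inside $\uu'_{m_1,m_2}(q)$. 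The mechanism behind the claim is that, after its first digit, each block reads a prefix of $(p_i)$ or of $\overline{(p_i)}$, so a tail beginning inside a block first follows a shift of $(p_i)$; the self-admissibility $\overline{(p_i)}\preceq\sigma^j(p_i)\preceq(p_i)$ of the expansion $(p_i)$ of $1$ (Section~\ref{s3}) controls this portion, and the matrix $A$ is designed exactly so that the permitted transitions are those across which this domination survives for the full block length $2^n$.

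The main obstacle is precisely the verification of this last point: that each transition allowed by $A$, and no forbidden one, preserves the running comparison $\sigma^k(c)\preceq(p_i)$ across the block boundary for at least $2^n$ further digits. This is a finite case analysis over the admissible pairs $s_js_{j+1}$ and the offset of $k$ within $s_j$, but a delicate one, since it rests on the doubling self-similarity of the Thue--Morse sequence $\tau_{2^n}\cdots\tau_{2^{n+1}-1}=\overline{\tau_0\cdots\tau_{2^n-1}}$, through which the blocks $a_n,\overline{a_n}$ and their translates encode the comparisons of $\sigma^j(p_i)$ with $(p_i)$ at the relevant indices. Carrying out this boundary bookkeeping, and checking that each forbidden transition would force an early ascent above $(p_i)$ so that $A$ cannot be enlarged, is where essentially all the work lies; the remainder is the formal packaging described above.
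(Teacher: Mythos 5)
First, a point of comparison: the paper itself does not prove Lemma \ref{l62} at all --- it is quoted from Kong--Li--Dekking \cite[p.~2829]{DLD} --- so your proposal has to be judged on its own merits. Its skeleton is indeed the natural route: translate by $+m_2$ to the alphabet $\set{0,\ldots,2m}$, so that $(\lambda_i)$ of \eqref{61} becomes the sequence $(p_i)$ of \eqref{33}; invoke the lexicographic characterization of unique expansions (Lemmas \ref{l71} and \ref{l73}); use the reflection invariance of the subshift to halve the work; and compare tails with $(p_i)$, using that $(\alpha_i(q))\succ(p_i)$ with first discrepancy at some index $L$. Your bookkeeping (a first discrepancy that is a decrease is harmless, an increase occurring after position $L$ is harmless) is also correct. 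The problem is your core combinatorial claim, which you never prove and which is in fact false as you state it. You fix $n$ solely by the size requirement $2^n\ge L$, saying ``this is the only place the freedom in choosing $n$ is used.'' Consider, however, the transition $\overline{a_n}\to a_n$, which the matrix $A$ permits, and the tail beginning at the \emph{last} letter of $\overline{a_n}$: in translated digits it reads $\overline{p_{2^n-1}},\,m,\,p_1,p_2,p_3,\ldots$, while $(p_i)$ begins $(m+1),m,(m-1),(m+1),\ldots$. Since $\tau_{2^n-1}=n\bmod 2$ and $\tau_{2^n-2}=(n-1)\bmod 2$, for every \emph{even} $n$ one gets $p_{2^n-1}=m-1$, hence $\overline{p_{2^n-1}}=m+1$, and this tail is $(m+1),m,(m+1),\ldots$: its first discrepancy with $(p_i)$ is an increase at position $3$, no matter how large $n$ is. For $q$ close enough to $q_{KL}$ one has $L>3$, so this tail is lexicographically larger than $(\alpha_i(q))$ and the concatenation is not a unique expansion: with $n$ even the conclusion of the lemma genuinely fails near $q_{KL}$.

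This shows two things. First, the choice of $n$ must exploit the fine (parity) structure of the Thue--Morse sequence at the ends of the blocks, not merely the largeness of $2^n$: here one needs $n$ odd, for which $\overline{p_{2^n-1}}=m-1$ and the same junction starts with a strict decrease; and every admissible junction, at every offset, is subject to exactly this kind of boundary effect. Second, and decisively, the junction-by-junction verification --- which you yourself describe as ``where essentially all the work lies'' and defer --- is not a routine finish: it is the entire mathematical content of the lemma, and its outcome is sensitive to details that your plan fixes incorrectly. As it stands, the proposal is a strategy outline whose crucial step is missing, and whose one concrete prescription for carrying that step out (take any $n$ with $2^n\ge L$) is refuted by the computation above.
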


We deduce from Lemma \ref{l62}  the following statement:

\begin{lemma}\label{l63}
Assume that $m_1+m_2$ is even, and consider the words $w_1:=b_n\overline{a_nb_n}a_n$ and $w_2:=\overline{a_n}a_n$ as in Lemma \ref{l62}.
Then:

\begin{enumerate}[\upshape (i)]
\item We have $d_2^*(w_1)<d_2^*(w_2)$.
\item For any  real number $d \in [d_2^*(w_1),d_2^*(w_2)]$ there exists a sequence $(t_i)\in\set{w_1,w_2}^{\NN}$ such that all three densities $d_1((t_i))$, $d_2((t_i))$ and $d_3((t_i))$ exist, and $d_2((t_i))=d$.
\end{enumerate}
\end{lemma}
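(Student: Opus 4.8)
The plan is to reduce both parts to elementary digit counts, exploiting that the reflection $c\mapsto 2\mu-c$ fixes the digit $\mu$ and interchanges the digits $\mu-1$ and $\mu+1$.

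For part (i), write $M(u)$ for the number of occurrences of $\mu$ in a word $u$. Since reflection fixes $\mu$, we have $M(\overline u)=M(u)$ for every word $u$; in particular $M(\overline{a_n})=M(a_n)$ and $M(\overline{b_n})=M(b_n)$. Because $a_n$ and $b_n$ coincide except in their leading digit ($\mu$ versus $\mu-1$), I get $M(a_n)=M(b_n)+1$, while $\abs{a_n}=\abs{b_n}=2^n$. From $w_2=\overline{a_n}a_n$ and $w_1=b_n\overline{a_n}\,\overline{b_n}a_n$ this yields $M(w_2)=2M(a_n)$, $\abs{w_2}=2\cdot 2^n$, and $M(w_1)=2M(a_n)+2M(b_n)$, $\abs{w_1}=4\cdot 2^n$. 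A one-line subtraction then gives
\[
d_2^*(w_2)-d_2^*(w_1)=\frac{2M(a_n)}{2\cdot 2^n}-\frac{2M(a_n)+2M(b_n)}{4\cdot 2^n}=\frac{2\bigl(M(a_n)-M(b_n)\bigr)}{4\cdot 2^n}=\frac{1}{2^{n+1}}>0,
\]
which is exactly the strict inequality of (i).

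For part (ii) I would first record the symmetry that makes $d_1$ and $d_3$ free. Letting $P(u),Q(u)$ count the digits $\mu-1$ and $\mu+1$, reflection gives $P(\overline u)=Q(u)$. Since $w_1$ and $w_2$ each pair every constituent block with its reflection ($a_n$ with $\overline{a_n}$, and also $b_n$ with $\overline{b_n}$ in $w_1$), I obtain $P(w_i)=Q(w_i)$ for $i=1,2$. Hence any finite concatenation $u$ of blocks $w_1,w_2$ satisfies $P(u)=Q(u)$, so that $d_1^*(u)=d_3^*(u)$; combined with $d_1^*(u)+d_2^*(u)+d_3^*(u)=1$ this forces $d_1^*(u)=d_3^*(u)=\bigl(1-d_2^*(u)\bigr)/2$. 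Consequently it suffices to realize the prescribed value $d$ of $d_2$, after which $d_1$ and $d_3$ automatically exist and equal $(1-d)/2$.

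To realize $d_2=d$ for $d\in[d_2^*(w_1),d_2^*(w_2)]$, I take the periodic sequence $w_1^\infty$ or $w_2^\infty$ at the two endpoints, and for $d_2^*(w_1)<d<d_2^*(w_2)$ build $(t_i)$ by the balancing rule: having placed $K$ blocks with digit-prefix $u_K$, append $w_2$ if $d_2^*(u_K)\le d$ and $w_1$ otherwise. Since every block has length at most $\abs{w_1}$ while the cumulative length $T_K\to\infty$, each appended block perturbs $d_2^*$ by at most $O(1/T_K)$; as appending $w_1$ pushes $d_2^*$ toward $d_2^*(w_1)<d$ and appending $w_2$ pushes it toward $d_2^*(w_2)>d$, the overshoot past $d$ tends to $0$ and $d_2^*(u_K)\to d$. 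Finally, for an arbitrary $m$ the prefix $t_1\cdots t_m$ differs from the nearest block boundary by at most $\abs{w_1}$ digits, so its density differs from the block-boundary value by $O(\abs{w_1}/m)\to 0$; hence $d_2((t_i))=d$, and by the previous paragraph $d_1((t_i))=d_3((t_i))=(1-d)/2$ exist as well.

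I expect the main obstacle to be the convergence step in the balancing construction: showing that the running density is driven to the \emph{exact} target $d$ rather than merely staying in the interval, and that the limit persists at all indices $m$ and not only along block boundaries. Both points rest entirely on the estimate that a single block perturbs the running density by a vanishing amount, so the remaining verification is routine.
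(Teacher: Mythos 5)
Your proof is correct, and in part (ii) it takes a genuinely different route from the paper's. For (i) the two arguments are essentially the same digit count: the paper observes that reflection preserves $d_2^*$ and that $d_2^*(b_n)<d_2^*(a_n)$, then averages over the constituent blocks; you do the same but make the gap explicit, $d_2^*(w_2)-d_2^*(w_1)=2^{-(n+1)}$. In (ii) the approaches diverge. The paper restricts to sequences of the special form $w_1^{n_1}w_2^{n_2}w_1^{n_3}w_2^{n_4}\cdots$, merely asserts (without construction) that $(n_j)$ can be chosen so that $d_2((t_i))=d$, and then gets the existence of $d_1$ and $d_3$ indirectly: along run boundaries all three densities are affine functions of the asymptotic fraction $r_k$ of length occupied by $w_1$-blocks, and since $d_2^*(w_1)\ne d_2^*(w_2)$ by (i), the existence of $d_2((t_i))$ forces $(r_k)$ to converge, which in turn yields $d_1((t_i))$ and $d_3((t_i))$. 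You instead exploit the reflection symmetry: each of $w_1,w_2$ pairs every constituent block with its reflection, so the digits $\mu-1$ and $\mu+1$ occur equally often in any finite concatenation, giving $d_1^*=d_3^*=(1-d_2^*)/2$ identically; hence only $d_2$ needs to be controlled, and you supply the explicit balancing construction that the paper leaves implicit. What each buys: the paper's $r_k$-argument needs no symmetry of the two words (only $d_2^*(w_1)\ne d_2^*(w_2)$), so it is the more general scheme; your argument is tied to the reflection structure but yields the exact values $d_1=d_3=(1-d)/2$, and your block-by-block construction keeps consecutive checkpoints at bounded distance $\abs{w_1}$, which makes the passage from block boundaries to arbitrary prefixes immediate --- a point that requires more care in the paper's setting, where the runs $w_1^{n_j}$ may be arbitrarily long.
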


\begin{proof}
(i)  It follows froms \eqref{62} and the definition of the reflection that
\begin{equation*}
d_2^*(\overline{b_n})=d_2^*(b_n)
<d_2^*(a_n)=d_2^*(\overline{a_n}).
\end{equation*}
This implies the required inequality
\begin{equation*}
d_2^*(b_n\overline{a_nb_n}a_n)<d_2^*(\overline{a_n}a_n).
\end{equation*}
\medskip

(ii) For any fixed real number $d \in [d_2^*(w_1),d_2^*(w_2)]$, there exists a sequence $(n_j)$ of natural numbers  such that the sequence
\begin{equation*}
(t_i):=w_1^{n_1}w_2^{n_2}w_1^{n_3}w_2^{n_4}\cdots
\end{equation*}
satisfies the equality $d_2((t_i))=d$.
By Lemma \ref{l62} we have $(t_i)\in \uu_{m_1,m_2}'(q)$.
It remains to show that the densities $d_1((t_i))$ and $d_3((t_i))$ exist, too.

Since
\begin{equation*}
d_{\ell}((t_i))
=\lim_{k\to\infty}\frac{4d_{\ell}^*(w_1)\sum_{j=1}^kn_{2j-1}+2d_{\ell}^*(w_2)\sum_{j=1}^kn_{2j}}{4\sum_{j=1}^kn_{2j-1}+2\sum_{j=1}^kn_{2j}}
\end{equation*}
for $\ell=1,2,3$, setting
\begin{equation*}
r_k:=\frac{4\sum_{j=1}^kn_{2j-1}}{4\sum_{j=1}^kn_{2j-1}+2\sum_{j=1}^kn_{2j}},\quad k=1,2,\ldots
\end{equation*}
we have
\begin{equation}\label{63}
d_{\ell}((t_i))
=d_{\ell}^*(w_2)+[d_{\ell}^*(w_1)-d_{\ell}^*(w_2)]\cdot\lim_{k\to\infty}r_k,\quad \ell=1,2,3.
\end{equation}
Since $d_2^*(w_1)\ne d_2^*(w_2)$, the existence of the density $d_2((t_i))$ implies the convergence of the sequence $(r_k)$, and then the densities $d_1((t_i))$ and $d_3((t_i))$ also exist by \eqref{63}.
\end{proof}

\begin{proof}[Proof of Theorem \ref{t12}]
(i) Assume that $m_1=m_2=m$; then $q_{KL}>m+1$.

Let $q>q_{KL}$.
By Lemmas \ref{l62} and \ref{l63}, for each real number $d \in [d_2^*(w_1),d_2^*(w_2)]$ there exists a $t\in \uu_{m_1,m_2}(q)$ whose unique expansion  $(t_i)\in\uu_{m_1,m_2}'(q)$ satisfies the equality $d_2((t_i))=d$, and the densities $d_1((t_i))$ and $d_3((t_i))$ exist, too.

Applying  Lemma \ref{l44} we obtain the equality
\begin{align*}
\dim_H(\Gamma_{q,m}\cap(\Gamma_{q,m}+t))
&=\frac{\log(m+1)}{\log q}{d_2}((t_i))+\frac{\log m}{\log q}({d_1}((t_i))+{d_3}((t_i)))\\
&=d\cdot\frac{\log(m+1)}{\log q}+(1-d)\frac{\log m}{\log q}\\
&=d\cdot\frac{\log\frac{m+1}{m}}{\log q}+\frac{\log m}{\log q}.
\end{align*}
Since $d$ was chosen arbitrarily in the interval $[d_2^*(w_1),d_2^*(w_2)]$, we conclude that  $\dd_{m,m}(q)$ contains the interval
\begin{equation*}
\left[\frac{\log\frac{m+1}{m}}{\log q}d_2^*(w_1)+\frac{\log m}{\log q},\frac{\log\frac{m+1}{m}}{\log q}d_2^*(w_2)+\frac{\log m}{\log q}\right].
\end{equation*}
\medskip

(iii)
If $1<q\le q_1$,  then $\uu_{m,m}'(q)=\set{(-m)^\infty,m^\infty}$.
If $k=1$ and $(t_i)\in\uu_{m,m}'(q)\setminus\set{(-m)^\infty,m^\infty}$, then $(t_i)$ ends with $0^\infty$.

Henceforth we assume that $k\ge2$.
Since $m_1=m_2$, we have $(\lambda_i)=(\tau_i-\tau_{i-1})$ by \eqref{61}, and therefore $\lambda_i\in\set{-1,0,1}$ for all $i$.
Furthermore, since $q_k<q\le q_{k+1}$, $(m+t_i)$ is eventually periodic with period block $\omega_j\overline{\omega_j}$ for $1\le j<k$.
Therefore, writing
\begin{equation*}
u_n:=\lambda_1 \cdots \lambda_{2^n}
\qtq{and}
\overline{u_n}:=(-\lambda_1) \cdots (-\lambda_{2^n})
\end{equation*}
for brevity, and applying Lemma \ref{l44} we have
\begin{align*}
&\dim_H(\Gamma_{q,m} \cap (\Gamma_{q,m}+t))\\
&=\frac{[\log (m+1)]d_2^*(u_j \overline{u_j})+(\log m)[d_3^*(u_j \overline{u_j})+d_1^*(u_j \overline{u_j})]}{\log q}\\
&=\frac{[\log (m+1)][-\sum_{i=1}^j\left(-\frac{1}{2}\right)^i]+(\log m)[1+\sum_{i=1}^j\left(-\frac{1}{2}\right)^i]}{\log q}\\
&=-\frac{\log \frac{m+1}{m}}{\log q}\sum_{i=1}^j\left(-\frac{1}{2}\right)^i+\frac{\log m}{\log q}.
\end{align*}
The second equality follows from Lemma \ref{l61} and from the equalities $d_2^*(u_j)=d_2^*(u_j\overline{u_j})=-\sum_{i=1}^j\left(-\frac{1}{2}\right)^i$.
\medskip

(ii)
Since $q=q_{KL}$, the sequence $(t_i):=(\lambda_i)$ is the unique expansion of some number $t$.
Applying Lemmas \ref{l44} and \ref{l61} (ii) we have
\begin{align*}
\dim_H(\Gamma_{q,m} \cap (\Gamma_{q,m}+t))&=\frac{\log(m+1)d_2((\lambda_i))+(\log m)(d_1((\lambda_i))+d_3((\lambda_i)))}{\log q}\\
&=\frac{\log (m+1)+2\log m}{3\log q}\\
&=\frac{\log [m^2(m+1)]}{3\log q}.
\end{align*}
The remaining part of (ii) follows from (iii) and from the structure of the periods of the elements of $\uu_{m,m}'(q)$, because  $q_k\rightarrow q_{KL}$ as $k\rightarrow\infty$.
\end{proof}

\section{Proof of Theorems \ref{t15} and \ref{t16}}\label{s7}

For convenience we restate the theorems to be proved in this section:

\begin{t15}\mbox{}
Assume that  $m_1=m_2=m$, and set
\begin{equation*}
c_1:=\frac{\log m}{\log q},\quad
c_2:=\frac{\log (m+1)}{\log q}.
\end{equation*}

\begin{enumerate}[\upshape (i)]
\item If $q\in [q_c,\infty)$, then $\dd_{m,m}(q)$ contains the interval $[c_1,c_2]$.
\item If $q\in (m+1,q_c)$, then there exists a $\delta>0$ such that
\begin{equation*}
\dd_{m,m}(q)\cap(c_2-\delta,c_2)=\varnothing.
\end{equation*}
\end{enumerate}
\end{t15}

To state the second theorem  we recall the definition of the sets
\begin{align*}
\ss_{m,m}(q)
&:=\set{t \in \uu_{m,m}(q): \Gamma_{q,m} \cap (\Gamma_{q,m}+t) \; \textrm{is a self-similar set}},\\
\hh_{m,m}(q)&:=\set{\dim_H(\Gamma_{q,m} \cap (\Gamma_{q,m}+t)): t \in \ss_{m,m}(q)}.
\end{align*}

\begin{t16}\mbox{}
Assume that  $m_1=m_2=m$, and set
\begin{equation*}
c_1:=\frac{\log m}{\log q},\quad
c_2:=\frac{\log (m+1)}{\log q}.
\end{equation*}

\begin{enumerate}[\upshape (i)]
\item If $q\in [q_c,\infty)$, then $\hh_{m,m}(q)$ is dense in $[c_1,c_2]$.
\item If $q\in (m+1,q_c)$, then there exists a $\delta>0$ such that
\begin{equation*}
\hh_{m,m}(q)\cap(c_2-\delta,c_2)=\varnothing.
\end{equation*}
\end{enumerate}
\end{t16}

First we recall from \cite{DarKat1993,DarKat1995,KL2,BK} a technical lemma:

\begin{lemma}\label{l71}
Let $q\in(1,M+1]$ and $(\alpha_i(q))$ be the quasi-greedy expansion of $1$ in base $q$.
If $(d_i)$ is an expansion of $x$, then  $x\in \uu_{M,q}$ if and only if
\begin{align*}
(d_{n+i})\prec &(\alpha_i(q))  \qtq{whenever}d_1 \cdots d_n\neq M^n\intertext{and}
\overline{(d_{n+i})}\prec &(\alpha_i(q))  \qtq{whenever}d_1 \cdots d_n\neq 0^n.
\end{align*}
\end{lemma}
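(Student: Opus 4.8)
The plan is to prove the equivalence in two stages: first characterize membership in $\uu_{M,q}$ by the absence of any local ``carry'', and then translate the carry conditions into the lexicographic inequalities of the statement using the defining properties of the quasi-greedy expansion $(\alpha_i(q))$. The conceptual core is the elementary two-expansion identity. If $(d_i)$ and $(e_i)$ are distinct expansions of $x$ and $n$ is their first disagreement, say $d_n<e_n$, then equating the tails from index $n$ gives $\sum_{i\ge1}(d_{n+i}-e_{n+i})q^{-i}=e_n-d_n\ge1$; since $e_{n+i}\ge0$ this forces $d_n<M$ together with $\sum_{i\ge1}d_{n+i}q^{-i}\ge1$, while reading the same identity for the reflected digits forces $e_n>0$ together with $\sum_{i\ge1}(M-e_{n+i})q^{-i}\ge1$. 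I would deduce that $x\notin\uu_{M,q}$ exactly when $(d_i)$ admits an index $N$ with $d_N<M$ and $\sum_{i\ge1}d_{N+i}q^{-i}\ge1$ (an upward carry), or an index $N$ with $d_N>0$ and $\sum_{i\ge1}(M-d_{N+i})q^{-i}\ge1$ (a downward carry); conversely either inequality lets one build a genuinely different expansion by raising (respectively lowering) $d_N$ by one and re-expanding the tail whose value has been decreased (increased) by $1$ and still lies in $[0,M/(q-1)]$.

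For the second stage I would use that $(\alpha_i)$ is the largest \emph{infinite} expansion of $1$, so that $\sum_{i\ge1}\alpha_iq^{-i}=1$ and $(\alpha_{n+i})_{i\ge1}\preceq(\alpha_i)$ for every $n$ (self-admissibility). The comparison lemmas of \cite{BK} convert the arithmetic condition $\sum_{i\ge1}d_{N+i}q^{-i}<1$ into the lexicographic condition $(d_{N+i})_{i\ge1}\prec(\alpha_i)$, and likewise for the reflected tails; thus the absence of upward carries is the family of inequalities $(d_{N+i})\prec(\alpha_i)$ ranging over all $N$ with $d_N<M$, and the absence of downward carries is $\overline{(d_{N+i})}\prec(\alpha_i)$ over all $N$ with $d_N>0$. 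It then remains only to replace the gate $d_N<M$ by the stated $d_1\cdots d_N\ne M^N$. One direction is immediate. For the other, if $d_1\cdots d_N\ne M^N$ while $d_N=M$, I take the largest index $k<N$ with $d_k<M$, so $d_{k+1}\cdots d_N=M^{N-k}$; the carry-free condition at $k$ gives $(d_{k+i})\prec(\alpha_i)$, whose leading $N-k$ digits are $M$'s, so the comparison forces $\alpha_1\cdots\alpha_{N-k}=M^{N-k}$ and hence $(d_{N+i})\prec(\alpha_{(N-k)+i})$; self-admissibility then yields $(d_{N+i})\prec(\alpha_i)$, as required, and the reflected case is symmetric.

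The hard part will be the boundary case $\sum_{i\ge1}d_{N+i}q^{-i}=1$ in the translation step. There the tail $(d_{N+i})$ is itself an expansion of $1$, which may be lexicographically \emph{smaller} than $(\alpha_i)$ and yet still trigger a carry, so that a carry arising at index $N$ need not surface as a lexicographic violation at $N$ but only at some later shift; this is precisely why the equivalence must be read holistically over all $N$ rather than position by position. I expect to resolve this exactly as in the gate reconciliation above, by invoking the self-admissibility $(\alpha_{n+i})\preceq(\alpha_i)$ of the quasi-greedy expansion, which governs the disagreement between the arithmetic order and the lexicographic order at value $1$. This is the one place where the quasi-greedy comparison machinery of \cite{DarKat1993,DarKat1995,KL2,BK} does the genuine work, the rest being the routine carry bookkeeping of the first stage.
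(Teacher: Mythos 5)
The paper itself gives no proof of Lemma 7.1: it is recalled verbatim from \cite{DarKat1993,DarKat1995,KL2,BK}, so the only meaningful comparison is with the standard arguments in those references. Your plan reconstructs exactly that standard route. Stage one (membership in $\uu_{M,q}$ is equivalent to the absence of upward and downward carries) is correct and essentially complete as you state it, and your gate reconciliation --- passing from the gate $d_n<M$ to the gate $d_1\cdots d_n\ne M^n$ via the largest $k<n$ with $d_k<M$, forcing $\alpha_1\cdots\alpha_{n-k}=M^{n-k}$ and then applying $(\alpha_{(n-k)+i})\preceq(\alpha_i)$ --- is also correct.

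The one place where your proposed mechanism is wrong (even though your instinct about where the difficulty sits is exactly right) is the boundary case $\sum_i d_{N+i}q^{-i}=1$. You say you expect to handle it ``exactly as in the gate reconciliation above, by invoking the self-admissibility $(\alpha_{n+i})\preceq(\alpha_i)$.'' That cannot work as described: the gate reconciliation and self-admissibility are purely lexicographic statements, and no lexicographic manipulation of the single inequality $(d_{N+i})\prec(\alpha_i)$ can detect that the tail has value exactly $1$. Concretely, for $M=1$ and $q$ the Golden Ratio one has $(\alpha_i)=(10)^\infty$, and the tail $01^\infty$ satisfies $01^\infty\prec(10)^\infty$ while its value is exactly $1$; the violation only appears after one more shift, where $1^\infty\succ(10)^\infty$. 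What the cited machinery actually supplies here is the quantitative all-shifts comparison lemma: if \emph{every} shift of a sequence $(c_i)$ is $\preceq(\alpha_i)$, then $\sum_i c_iq^{-i}\le 1$, with equality only for $(c_i)=(\alpha_i)$. This must be combined with an observation you never state but which is the real reason the prefix gate is the right one: the condition $d_1\cdots d_n\ne M^n$ is monotone in $n$, so once $d_N<M$ every later position is gated, hence every shift of $(d_{N+i})$ obeys the strict lexicographic bound, and the equality case of the lemma then converts a carry at $N$ into a lexicographic violation at some gated position beyond $N$. Since that lemma is indeed part of the comparison machinery of \cite{BK} to which you defer, the defect is one of naming the wrong tool rather than of overall strategy; but the sentence describing your resolution, taken literally, describes an argument that would fail.
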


We also recall from \cite{BK} the following characterization of the quasi-greedy expansion of 1.

\begin{lemma}\label{l72}
Let $(\alpha_i(q))$ be  the quasi-greedy expansion of $1$ in some base $q\in(1,M+1]$.
Then the map $q \rightarrow (\alpha_i(q))$ is a strictly increasing bijection from the interval  $[1,\infty)$ onto the set of all infinite sequences $(\alpha_i(q)) \in \set{0,\ldots,M}^{\NN}$ satisfying
\begin{equation*}
\alpha_{k+1}\alpha_{k+2} \cdots \preceq \alpha_1\alpha_2 \cdots\qtq{for all}
k\geq 0.
\end{equation*}
\end{lemma}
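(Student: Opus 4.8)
The plan is to establish three things about the map $q\mapsto(\alpha_i(q))$: that it is well defined and lands in the stated set of admissible sequences; that it is strictly increasing, hence injective; and that it is onto. Throughout I write $\pi_q(\beta):=\sum_{i\ge1}\beta_i/q^i$ for the value in base $q$ of a sequence $\beta\in\set{0,\ldots,M}^{\NN}$, and I work on the interval $(1,M+1]$ given in the hypothesis, on which $1$ has a quasi-greedy expansion (there $1\le M/(q-1)$); the claimed domain $[1,\infty)$ should read $(1,M+1]$. By definition $(\alpha_i(q))$ is the lexicographically largest \emph{infinite} expansion of $1$. To verify the admissibility condition, let $r_k:=\pi_q(\sigma^k(\alpha(q)))$ be the successive remainders. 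The quasi-greedy algorithm keeps $r_k\in(0,1]$, and the tail $(\alpha_{k+i}(q))_{i\ge1}$ is itself the quasi-greedy expansion of $r_k$; since the quasi-greedy expansion is monotone in the value it represents and $r_k\le 1=r_0$, we obtain $\sigma^k(\alpha(q))\preceq\alpha(q)$ for all $k\ge1$ (the case $k=0$ being trivial). Thus the map sends $(1,M+1]$ into the admissible set.

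For strict monotonicity the key estimate is: if $\beta$ is any infinite sequence with $\beta\succ\alpha(q)$, then $\pi_q(\beta)>1$. Indeed, at the first coordinate $m$ where $\beta$ and $\alpha(q)$ differ we have $\beta_m\ge\alpha_m(q)+1$, whence
\[
\pi_q(\beta)-1\ge q^{-m}-\sum_{i>m}\frac{\alpha_i(q)}{q^i}=q^{-m}(1-r_m)\ge 0,
\]
and the inequality is strict because $\beta$, being infinite, has a nonzero tail beyond position $m$; here I used $r_m\le 1$ from the previous step. Now let $p<q$ and suppose, for contradiction, that $\alpha(p)\succeq\alpha(q)$. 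If $\alpha(p)=\alpha(q)$, then the same nonzero sequence satisfies $\pi_p(\cdot)=1=\pi_q(\cdot)$, impossible since $\pi_{(\cdot)}(\alpha)$ is strictly decreasing in the base. If $\alpha(p)\succ\alpha(q)$, the estimate gives $\pi_q(\alpha(p))>1$, while at the same time $\pi_q(\alpha(p))<\pi_p(\alpha(p))=1$ because $q>p$ and $\alpha(p)$ is nonzero; again a contradiction. Hence $\alpha(p)\prec\alpha(q)$, so the map is strictly increasing and in particular injective.

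Finally I would prove surjectivity, which is the heart of the matter. Given an admissible infinite sequence $(\alpha_i)$, admissibility forces $\alpha_1\ge 1$ (otherwise a later nonzero digit would make some shift exceed $\alpha$, and $\alpha=0^{\infty}$ is excluded), so $\pi_q(\alpha)$ is continuous and strictly decreasing in $q$, tends to $+\infty$ as $q\downarrow 1$, and satisfies $\pi_{M+1}(\alpha)\le\sum_{i\ge1}M/(M+1)^i=1$. Thus there is a unique $q\in(1,M+1]$ with $\pi_q(\alpha)=1$, making $\alpha$ an infinite expansion of $1$, so $\alpha\preceq\alpha(q)$. It remains to prove $\alpha\succeq\alpha(q)$, i.e.\ that $\alpha$ is generated by the quasi-greedy algorithm. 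Writing $t_k:=\pi_q(\sigma^k\alpha)$, the relations $t_0=1$ and $t_{k+1}=qt_k-\alpha_{k+1}$ show that $\alpha$ coincides with $\alpha(q)$ as soon as $t_k\le 1$ for all $k$: the bound $t_{k+1}\le 1$ says exactly that $\alpha_{k+1}$ is the largest admissible digit, while $t_k>0$ (from infiniteness) keeps the remainders positive.

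The main obstacle is therefore the tail bound $t_k\le 1$. It cannot be read off from the single inequality $\sigma^k\alpha\preceq\alpha$ by comparing first-difference positions, since when $q<M+1$ a lexicographically smaller sequence can have a \emph{larger} value in base $q$ (e.g.\ a leading $0$ followed by the maximal tail). The correct route is the Parry-type maximality of the subshift $X_\alpha:=\set{\beta:\sigma^j\beta\preceq\alpha\text{ for all }j\ge 0}$: among all $\beta\in X_\alpha$, the lexicographic maximum $\alpha$ is also the value maximum, so $\pi_q(\beta)\le\pi_q(\alpha)=1$ throughout $X_\alpha$. Since each $\sigma^k\alpha$ again lies in $X_\alpha$ (its shifts are shifts of $\alpha$, hence $\preceq\alpha$), this gives $t_k\le 1$, as required. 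I expect this value-maximality to be the delicate point: it is proved by a self-referential estimate on $X_\alpha$ in which the admissibility constraint, imposed at \emph{every} shift rather than at a single disagreement, controls the tails and suppresses the carries, and it is the quasi-greedy analogue of Parry's theorem (and may alternatively be quoted from the references cited before the lemma). Strict monotonicity of $q\mapsto\pi_q(\alpha)$ then shows the base produced is unique, so the map is a bijection, strictly increasing by the second step.
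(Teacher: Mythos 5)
First, a point of comparison: the paper offers no proof of this lemma at all --- it is explicitly ``recalled from \cite{BK}'' --- so your proposal has to stand on its own rather than be matched against an in-paper argument. Most of it does stand. The correction of the domain to $(1,M+1]$, the verification that the map lands in the admissible set, and the strict-monotonicity/injectivity argument are all sound; the only caveat in those steps is that the assertion $r_k\in(0,1]$ for the quasi-greedy remainders is load-bearing (your monotonicity estimate uses $1-r_m\ge 0$) and itself needs the one-line induction using $q\le M+1$: if $\alpha_{k+1}=M$ then $r_{k+1}=qr_k-M\le q-M\le 1$, and otherwise maximality of the digit forces $r_{k+1}\le 1$. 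Your reduction of surjectivity --- produce $q$ by the intermediate value theorem, then show $\alpha$ is quasi-greedy as soon as all remainders $t_k=\pi_q(\sigma^k\alpha)$ lie in $(0,1]$ --- is also correct.

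The genuine gap is exactly where you flagged it: the tail bound $t_k\le 1$. This is the only non-routine assertion in the whole lemma, and your proposal does not prove it; it names it (``value-maximality on $X_\alpha$''), predicts the shape of the proof, and offers to quote it from the references. Quoting is circular here, since the result recalled in \cite{BK} \emph{is} this characterization and its proof consists essentially of this step; so as written your text is a correct reduction, not a proof. Fortunately the missing argument is short, and it is the self-referential estimate you anticipated. Suppose $t_k>1$ for some $k$. Since $\sigma^k\alpha\preceq\alpha$ and $\sigma^k\alpha\ne\alpha$ (else $t_k=1$), let $j\ge1$ be the first index with $\alpha_{k+j}\ne\alpha_j$, so that $\alpha_{k+i}=\alpha_i$ for $i<j$ and $\alpha_{k+j}\le\alpha_j-1$. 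Splitting $t_k$ at position $j$ and using $\sum_{i=1}^{j}\alpha_i/q^i=1-q^{-j}t_j$ gives
\begin{equation*}
t_k\le\sum_{i=1}^{j-1}\frac{\alpha_i}{q^i}+\frac{\alpha_j-1}{q^j}+q^{-j}t_{k+j}
=1-q^{-j}t_j-q^{-j}+q^{-j}t_{k+j},
\end{equation*}
hence $t_{k+j}-1\ge q^{j}(t_k-1)+t_j>q\,(t_k-1)>0$, where $t_j>0$ because $\alpha$ is infinite. Thus $t_{k+j}>1$ again, and iterating produces indices $k=k_0<k_1<k_2<\cdots$ with $t_{k_n}-1>q^{n}(t_k-1)\to\infty$, contradicting the uniform bound $t_n\le M/(q-1)$. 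Inserting this argument at the point where you invoke ``Parry-type maximality'' closes the gap and makes the proof complete.
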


Since $\Omega_{-m,m}+m=\Omega_{2m}$, it is natural to call a sequence $(d_i)\in\Omega_{-m,m}^{\NN}$ to be the quasi-greedy expansion of $x$ in base $q$ over the alphabet $\Omega_{-m,m}$ if  $(d_i+m)\in \Omega_{2m}^{\NN}$ is the quasi-greedy expansion of $x+\frac{m}{q-1}$ in base $q$ over the alphabet $\Omega_{2m}$.

Then we can modify Lemmas \ref{l71} and \ref{l72} to obtain the following lemmas for the alphabet $\Omega_{-m,m}$.

\begin{lemma}\label{l73}
Let $(d_i)$ be an expansion of $x$ over the alphabet $\Omega_{-m,m}$ in some base $q\in(1,M+1]$, and let $(\alpha_i'(q))$ denote the quasi-greedy expansion of $\frac{q-(m+1)}{q-1}$ over the alphabet $\Omega_{-m,m}$.
Then $(d_i)$ is unique expansion if and only if the following two conditions are satisfied:
\begin{align*}
(d_{n+i})\prec &(\alpha_i'(q))  \qtq{whenever}d_1 \cdots d_n\neq m^n
\intertext{and}
(-d_{n+i})\prec &(\alpha_i'(q))  \qtq{whenever}d_1 \cdots d_n\neq (-m)^n.
\end{align*}
\end{lemma}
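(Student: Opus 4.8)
The plan is to deduce Lemma \ref{l73} from Lemma \ref{l71} by means of the affine change of alphabet $c_i:=d_i+m$, which carries $\Omega_{-m,m}$ onto $\Omega_{2m}=\set{0,\ldots,2m}$, placing us in the setting of Lemma \ref{l71} with $M=2m$. Under this substitution a sequence $(d_i)\in\Omega_{-m,m}^{\NN}$ is an expansion of $x$ in base $q$ if and only if $(c_i)\in\Omega_{2m}^{\NN}$ is an expansion of $x+\frac{m}{q-1}$, since $\sum\frac{d_i}{q^i}=x$ is equivalent to $\sum\frac{c_i}{q^i}=x+\frac{m}{q-1}$. As this correspondence is a bijection between the expansions of $x$ over $\Omega_{-m,m}$ and those of $x+\frac{m}{q-1}$ over $\Omega_{2m}$, it follows that $(d_i)$ is the unique expansion of $x$ over $\Omega_{-m,m}$ if and only if $x+\frac{m}{q-1}\in\uu_{2m,q}$, a situation governed by Lemma \ref{l71}.

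First I would record the three elementary facts that make the translation work. (1) Adding the constant $m$ to every digit preserves the lexicographic order, because $\prec$ is decided by the first differing coordinate; hence $(d_{n+i})\prec(\alpha_i'(q))$ if and only if $(c_{n+i})\prec(\alpha_i'(q)+m)$. (2) By the definition preceding the lemma, the quasi-greedy expansion $(\alpha_i'(q))$ of $\frac{q-(m+1)}{q-1}$ over $\Omega_{-m,m}$ satisfies $\alpha_i'(q)=\alpha_i(q)-m$, where $(\alpha_i(q))$ is the quasi-greedy expansion of $1$ over $\Omega_{2m}$; indeed $\sum\frac{\alpha_i(q)-m}{q^i}=1-\frac{m}{q-1}=\frac{q-(m+1)}{q-1}$. (3) The reflection $d\mapsto -d$ on $\Omega_{-m,m}$ corresponds to the reflection $c\mapsto 2m-c$ on $\Omega_{2m}$, since $2m-c_i=2m-(d_i+m)=(-d_i)+m$.

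With these in hand I would simply translate the two defining conditions of Lemma \ref{l71}. The hypotheses $d_1\cdots d_n\ne m^n$ and $d_1\cdots d_n\ne(-m)^n$ correspond, via $c_i=d_i+m$, to $c_1\cdots c_n\ne(2m)^n=M^n$ and $c_1\cdots c_n\ne 0^n$. The first condition of Lemma \ref{l71}, namely $(c_{n+i})\prec(\alpha_i(q))$, becomes $(d_{n+i})\prec(\alpha_i'(q))$ by facts (1) and (2). For the second, $\overline{(c_{n+i})}=2m-c_{n+i}=(-d_{n+i})+m$ by fact (3), so $\overline{(c_{n+i})}\prec(\alpha_i(q))$ is equivalent to $(-d_{n+i})\prec(\alpha_i'(q))$. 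This is precisely the pair of conditions in the statement, and the equivalence then follows at once from Lemma \ref{l71}.

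The only genuinely delicate point is the bookkeeping in fact (3): one must check that the reflection $c\mapsto M-c=2m-c$ used in Lemma \ref{l71} really becomes negation $d\mapsto -d$ after subtracting $m$, and that negation is the correct notion of reflection for $\Omega_{-m,m}$ (it is, as this alphabet is symmetric about $0$). Beyond this, and beyond noting that $q\le M+1=2m+1$ so that Lemma \ref{l71} applies, the argument is the routine verification that the affine bijection of alphabets intertwines expansions, uniqueness, lexicographic order, and reflections.
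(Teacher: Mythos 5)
Your proposal is correct and follows essentially the same route as the paper: the paper states Lemma \ref{l73} as an immediate modification of Lemma \ref{l71} via the correspondence $\Omega_{-m,m}+m=\Omega_{2m}$ (this is exactly how it defines quasi-greedy expansions over $\Omega_{-m,m}$), and your argument simply makes that translation explicit, correctly checking that the affine shift $c_i=d_i+m$ intertwines expansions, uniqueness, the lexicographic order, the reflections $d\mapsto -d$ and $c\mapsto 2m-c$, and sends $(\alpha_i'(q))$ to $(\alpha_i(q))$.
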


\begin{lemma}\label{l74}
Let $q \in (1,2m+1]$.
The map $q \mapsto (\alpha_i'(q))$ is a strictly increasing bijection of the interval  $(1,2m+1]$ onto the set of all infinite sequences $(\alpha_i'(q)) \in \set{-m,\ldots,m}^{\NN}$ satisfying
\begin{equation*}
\alpha_{k+1}'\alpha_{k+2}' \cdots \preceq \alpha_1'\alpha_2' \cdots \qtq{for all}k\geq 0,
\end{equation*}
where $(\alpha_i'(q))$ is the quasi-greedy expansion of $\frac{q-(m+1)}{q-1}$ over alphabet $\Omega_{-m,m}$. \end{lemma}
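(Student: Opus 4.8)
The plan is to reduce Lemma \ref{l74} to Lemma \ref{l72} by exploiting the digit translation $\Omega_{-m,m}+m=\Omega_{2m}$, under which every relevant feature (lexicographic order, the shift, the admissibility inequality, monotonicity) is preserved. Throughout I take $M=2m$, so that the interval $(1,2m+1]$ appearing in Lemma \ref{l74} is precisely the interval $(1,M+1]$ of Lemma \ref{l72}.

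First I would record the elementary order-theoretic fact underlying everything: the map $\Phi$ subtracting $m$ from each digit,
\begin{equation*}
\Phi\colon (c_i)\mapsto (c_i-m),
\end{equation*}
is a bijection from $\set{0,\ldots,2m}^{\NN}$ onto $\set{-m,\ldots,m}^{\NN}$ which commutes with the shift and preserves the lexicographic order. Indeed, if $(c_i)$ and $(c_i')$ first differ at position $k$, then $c_k<c_k'$ is equivalent to $c_k-m<c_k'-m$, so $(c_i)\prec(c_i')$ if and only if $\Phi((c_i))\prec\Phi((c_i'))$. In particular $\Phi$ carries the condition $\alpha_{k+1}\alpha_{k+2}\cdots\preceq\alpha_1\alpha_2\cdots$ (for all $k\ge0$) bijectively onto the corresponding condition $\alpha_{k+1}'\alpha_{k+2}'\cdots\preceq\alpha_1'\alpha_2'\cdots$; call a sequence satisfying such a condition \emph{admissible}.

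Next I would identify $(\alpha_i'(q))$ explicitly. By the definition of quasi-greedy expansions over $\Omega_{-m,m}$ recalled just above Lemma \ref{l73}, the sequence $(\alpha_i'(q))$ is the quasi-greedy expansion of $\frac{q-(m+1)}{q-1}$ over $\Omega_{-m,m}$ exactly when $(\alpha_i'(q)+m)$ is the quasi-greedy expansion over $\Omega_{2m}$ of
\begin{equation*}
\frac{q-(m+1)}{q-1}+\frac{m}{q-1}=\frac{q-1}{q-1}=1.
\end{equation*}
Hence $(\alpha_i'(q)+m)=(\alpha_i(q))$, where $(\alpha_i(q))$ denotes the quasi-greedy expansion of $1$ in base $q$ over $\set{0,\ldots,2m}$; equivalently $(\alpha_i'(q))=\Phi((\alpha_i(q)))$. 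It then remains to apply Lemma \ref{l72} with $M=2m$ and compose with $\Phi$: that lemma gives that $q\mapsto(\alpha_i(q))$ is a strictly increasing bijection of $(1,2m+1]$ onto the admissible sequences in $\set{0,\ldots,2m}^{\NN}$, and since $\Phi$ is an order isomorphism onto the admissible sequences in $\set{-m,\ldots,m}^{\NN}$, the composition $q\mapsto(\alpha_i'(q))=\Phi((\alpha_i(q)))$ is a strictly increasing bijection of $(1,2m+1]$ onto the admissible sequences in $\set{-m,\ldots,m}^{\NN}$, which is the assertion.

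I do not expect a genuine obstacle here; the lemma is essentially Lemma \ref{l72} transported across the additive change of alphabet. The only points demanding a little care are the verification that the uniform shift $\Phi$ genuinely preserves both the lexicographic order and the admissibility inequality (this is what makes strict monotonicity and surjectivity transfer intact), and the bookkeeping at the right endpoint $q=2m+1$, where $(\alpha_i(q))=(2m)^{\infty}$ and correspondingly $(\alpha_i'(q))=m^{\infty}$, so that the closed endpoint of the domain is indeed covered.
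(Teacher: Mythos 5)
Your proposal is correct and is essentially the paper's own argument: the paper gives no separate proof of Lemma \ref{l74}, merely asserting that Lemmas \ref{l71} and \ref{l72} can be ``modified'' for the alphabet $\Omega_{-m,m}$, and your digit-translation map $\Phi\colon (c_i)\mapsto(c_i-m)$, together with the computation $\frac{q-(m+1)}{q-1}+\frac{m}{q-1}=1$ identifying $(\alpha_i'(q)+m)$ with $(\alpha_i(q))$, is precisely that modification made explicit. (Note only that the domain ``$[1,\infty)$'' in the paper's statement of Lemma \ref{l72} is a typo for $(1,M+1]$, which your reading with $M=2m$ correctly assumes.)
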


Now consider the number $q_c=(m+2+\sqrt{m(m+4)})/2$, introduced in \cite{DLD}.
It follows from \eqref{16} and Lemma \ref{l71} that $(\alpha_i(q_c))=(m+1)m^{\infty}$, i.e., $(m+1)m^{\infty}$
is the quasi-greedy expansion of $1$ in base $q_c$ over the alphabet $\Omega_{2m}$.
Hence the quasi-greedy expansion of $\frac{q_c-(m+1)}{q_c-1}$ in base $q_c$ over $\Omega_{-m,m}$ is the following sequence:
\begin{equation}\label{71}
(\alpha_i^{'}(q_c))=10^{\infty}.
\end{equation}

\begin{lemma}\label{l75}
If $q\in[q_c,\infty)$ then for any sequence of natural numbers $(n_i)$, the \,sequence $(1(-1))^{n_1}0^{n_2}(1(-1))^{n_3}0^{n_4}\cdots$ belongs to $\uu_{m,m}'(q)$.
\end{lemma}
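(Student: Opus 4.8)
The plan is to verify the lexicographic characterization of unique expansions from Lemma~\ref{l73}, using the monotonicity of the map $q\mapsto(\alpha_i'(q))$ in Lemma~\ref{l74} together with the explicit value $(\alpha_i'(q_c))=10^{\infty}$ recorded in \eqref{71}. Write $(d_i):=(1(-1))^{n_1}0^{n_2}(1(-1))^{n_3}0^{n_4}\cdots$ and note that all of its digits lie in $\{-1,0,1\}\subseteq\Omega_{-m,m}$, since $m\ge1$. I would first dispose of the range $q>2m+1$: writing $M=2m$, here $q>M+1$, so the maps $x\mapsto(x+j)/q$, $j=0,\dots,M$, which govern expansions over $\{0,\dots,M\}$ (equivalently, after the shift $\Omega_{-m,m}+m=\Omega_{2m}$, over $\Omega_{-m,m}$) have pairwise disjoint images on $[0,M/(q-1)]$; hence \emph{every} sequence over $\Omega_{-m,m}$ is a unique expansion, and in particular $(d_i)\in\uu_{m,m}'(q)$. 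It remains to treat $q_c\le q\le 2m+1$ (note $q_c<2m+1$).

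For such $q$, Lemma~\ref{l74} gives $(\alpha_i'(q))\succeq(\alpha_i'(q_c))=10^{\infty}$. By Lemma~\ref{l73} it then suffices to show that every tail $(d_{n+i})_{i\ge1}$ and every reflected tail $(-d_{n+i})_{i\ge1}$ satisfies the \emph{strict} inequality $\prec 10^{\infty}$: transitivity upgrades this to $\prec(\alpha_i'(q))$, and the side hypotheses $d_1\cdots d_n\neq m^n$ and $d_1\cdots d_n\neq(-m)^n$ then play no role, being needed only to discard comparisons we have already established unconditionally. The verification is a short case analysis on where the index $n+1$ falls. If $d_{n+1}$ opens a pair $1(-1)$, the tail begins $1(-1)\cdots$ and is beaten by $10^{\infty}$ at the second digit ($-1<0$); if $d_{n+1}$ is the trailing $-1$ of a pair, or lies inside a $0$-block, the tail begins with $-1<1$ or $0<1$ and is immediately $\prec 10^{\infty}$. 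The reflected tails are symmetric, the only slightly delicate subcase being $d_{n+1}=-1$ at the \emph{end} of an oscillating block: there the tail is $(-1)0^{n_{2j}}1(-1)\cdots$, whose reflection $10^{n_{2j}}(-1)\cdots$ still lies strictly below $10^{\infty}$, because the next oscillating block forces a $-1$ immediately after the run of zeros.

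The step I expect to be the crux is the \emph{strictness} at the endpoint $q=q_c$, where $(\alpha_i'(q))=10^{\infty}$ exactly, so that a tail merely $\preceq 10^{\infty}$ would not suffice. This is precisely where the hypothesis that $(n_i)$ is a sequence of (positive) natural numbers enters: it guarantees infinitely many $1(-1)$ blocks, hence infinitely many nonzero entries in every tail, so no tail can equal $10^{\infty}$. Granting this, all the comparisons above are strict, both conditions of Lemma~\ref{l73} are met, and $(d_i)\in\uu_{m,m}'(q)$ for every $q\in[q_c,\infty)$.
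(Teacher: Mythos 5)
Your proof is correct and follows essentially the paper's own route: you verify the two lexicographic conditions of Lemma \ref{l73} against $(\alpha_i'(q_c))=10^{\infty}$ (the paper compresses your case analysis into the single assertion $(-1)0^{\infty}\prec(c_{n+i})\prec 10^{\infty}$ for all $n\ge 0$) and then pass to larger bases via the monotonicity of $q\mapsto(\alpha_i'(q))$ from Lemma \ref{l74}. Your separate, elementary treatment of the range $q>2m+1$ is in fact slightly more careful than the paper, whose proof invokes Lemmas \ref{l73} and \ref{l74} for all $q\ge q_c$ even though both are stated only for $q\in(1,2m+1]$.
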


\begin{proof}
We generalize the proof of \cite[Lemma 4.2]{BD} given for $m=1$.
Fix an arbitrary sequence $(n_i)$ of natural numbers.
Then
\begin{equation*}
(-1)0^\infty\prec (c_{n+i})\prec 10^\infty
\end{equation*}
for all $n\ge 0$, so that $(c_i)\in\uu_{m,m}'(q_c)$.
The case $q>q_c$ hence follows because
$\uu_{m,m}'(q_c)\subset\uu_{m,m}'(q)$ for all $q\ge q_c$
 by Lemmas \ref{l73} and \ref{l74}.
\end{proof}

\begin{proof}[Proof of Theorem \ref{t15}]
(i) For any fixed number $\lambda\in[0,1]$, we can choose a sequence $(a_j)$ of positive integers such that the density of the zero digits in the sequence
\begin{equation*}
(t_j):=(1(-1))^{a_1}0^{a_2}(1(-1))^{a_{3}}0^{a_{4}}\cdots
\end{equation*}
is equal to $\lambda$.
By Lemma \ref{l75} we have  $(t_j)\in \uu_{m,m}'(q)$.
Since $t_j\in\set{-1,0,1}$ for every $j$, we may apply Lemma \ref{l44} with
\begin{equation*}
n_j=
\begin{cases}
m+1&\text{if $t_j=0$,}\\
m&\text{if $t_j\ne 0$}
\end{cases}
\end{equation*}
to conclude that
\begin{equation*}
\dd_{m,m}(q)=\lambda c_2+(1-\lambda)c_1.
\end{equation*}
Since $\lambda\in[0,1]$ was arbitrary, this proves the relation $[c_1,c_2]\subset\dd_{m,m}(q)$.
\medskip

(ii) We are going to apply Lemma \ref{l44}.
Let us observe that
\begin{equation}\label{72}
n_j=m+1\qtq{if}t_j=0,\qtq{and}n_j\le m\qtq{if}t_j\ne 0.
\end{equation}

Fix $q\in(m+1,q_c)$ arbitrarily.
Then $\alpha'(q)\prec 10^{\infty}$ and $\alpha'(q)$ starts with $1$, hence there exist two  integers $k\ge 0$ and $\ell<0$ such that $\alpha'(q)$ starts with $10^k\ell$.
Now fix an arbitrary $t\in\uu_{m,m}(q)$, and let $(t_j)$ be its unique expansion.
We distinguish four cases.
If $(t_j)$ ends with $0^{\infty}$, then applying Lemma \ref{l44} and observing that $n_j=m+1$ for all $j$,
we obtain that
\begin{equation}\label{73}
\dim_H(\Gamma_{q,m} \cap (\Gamma_{q,m}+t))=c_2.
\end{equation}
If $(t_j)$ contains at most a finite number of zero digits, then $n_j\le m$ for all but finitely many indices, and therefore
\begin{equation}\label{74}
\dim_H(\Gamma_{q,m} \cap (\Gamma_{q,m}+t))\le c_1
\end{equation}
by Lemma \ref{l44}.
Otherwise, there exists an index $n$ such that $c_n=0$ and $c_{n+1}\ne 0$.
If $c_{n+1}>0$, then applying Lemma \ref{l73} and using the fact that $\alpha'(q)$ starts with $10^k\ell$, we obtain that
\begin{equation*}
c_{n+1}c_{n+2}\cdots
10^{k_1}(-1)0^{k_2}10^{k_3}(-1)0^{k_4}\cdots
\end{equation*}
with suitable nonnegative integers
$k_j\le k$.
Using \eqref{72} and applying Lemma \ref{l44} we conclude that
\begin{equation}\label{75}
\dim_H(\Gamma_{q,m} \cap (\Gamma_{q,m}+t))\le \frac{c_1+kc_2}{k+1}.
\end{equation}
Similarly, if $c_n=0$ and $c_{n+1}<0$, then
\begin{equation*}
c_{n+1}c_{n+2}\cdots
(-1)0^{k_1}10^{k_2}(-1)0^{k_3}10^{k_4}\cdots
\end{equation*}
with suitable nonnegative integers $k_j$, and applying Lemma \ref{l44} we get \eqref{75} again.
Setting
\begin{equation*}
\delta:=\frac{c_2-c_1}{k+1},
\end{equation*}
we conclude from \eqref{73}, \eqref{74} and \eqref{75} that
\begin{equation*}
\dd_{m,m}(q)\cap(c_2-\delta,c_2)=\varnothing.\qedhere
\end{equation*}
\end{proof}

For the proof of our last theorem we recall  from \cite[Theorem 3.2]{DLD} the following result:

\begin{lemma}\label{l76}
Let $q\in(m+1,\infty)$, $(t_i)$ be the unique expansion of $t$.
Then $t\in S_{m,m}(q)$ if and only if $(m-|t_i|)$ is strongly eventually periodic, i.e., $(m-|t_i|)=IJ^{\infty}$ with two suitable words of the same length and satisfying the relation $I\preceq J$.
\end{lemma}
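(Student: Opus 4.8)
The plan is to translate everything into the language of the homogeneous Moran set of Example \ref{e42} and then to match the self-similarity of that set against the combinatorial condition on $(m-|t_i|)$. First I would record the structural facts that make the statement plausible. By Lemma \ref{l44} (with $m_1=m_2=m$) the digit set $H_j=\Omega_m\cap(\Omega_m+t_j)$ is a block of $n_j=m+1-|t_j|$ consecutive integers, so the branching data of $E:=\Gamma_{q,m}\cap(\Gamma_{q,m}+t)$ is exactly $(n_j-1)=(m-|t_j|)$. Writing $E^{(k)}$ for the Moran set generated by the shifted sequence $(t_{k+i})_{i\ge1}$, one has $E^{(k)}=\bigcup_{c\in H_{k+1}}f_c(E^{(k+1)})$; since $H_{k+1}$ is a set of \emph{consecutive} integers, translating it to start at $0$ shows that, up to translation, $E^{(k)}$ depends only on the sizes $n_{k+1},n_{k+2},\dots$, i.e.\ only on the tail of $(m-|t_i|)$. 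This is precisely why the criterion can be phrased through $(m-|t_i|)$ alone, ignoring the signs of the $t_i$. I would also record the diameter recursion $\ell_k:=\operatorname{diam}E^{(k)}=\frac{(m-|t_{k+1}|)+\ell_{k+1}}{q}$, whence $\ell_k=\sum_{i\ge1}(m-|t_{k+i}|)q^{-i}$; this identifies the geometry of $E$ with the $q$-expansion of $(m-|t_i|)$ and serves as the main bookkeeping device.

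For the sufficiency direction I would start from a decomposition $(m-|t_i|)=IJ^{\infty}$ with $\abs{I}=\abs{J}=p$ and $I\preceq J$. The purely periodic tail $J^{\infty}$ makes $V:=E^{(p)}$ self-similar: grouping digits into blocks of length $p$ gives $V=\bigcup_{w}f_w(V')$, where $V'=E^{(2p)}$ is a translate of $V$ and each $f_w$ is a similarity of ratio $q^{-p}$, so $V$ is the attractor of a finite iterated function system of similarities. Then $E=E^{(0)}=\bigcup_{u\in\mathcal W_I}f_u(V)$, the union being over the length-$p$ words compatible with the block $I$. The role of the hypothesis $I\preceq J$ is to guarantee that these cylinders form a subfamily of the cylinders of $V$ (so that $E\subseteq V$ and the pieces do not overlap) and that the maps attached to $I$, together with the periodic maps of $V$, close up into a single system whose attractor is $E$. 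Verifying that this system is finite, consists of similarities, and has attractor exactly $E$—using the cylinder separation guaranteed by $q>m+1$—is the concrete content to be checked.

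For the necessity direction I would argue from a putative presentation $E=\bigcup_i S_i(E)$ by contracting similarities. Using that $E\subseteq\RR$ is totally disconnected with the explicit gap structure coming from the recursion for $\ell_k$, I would first show that the ratios of the $S_i$ must be integer powers of $q^{-1}$ and that each $S_i$ is a $q^{-p}$-scaling followed by a lattice translation; this reduces any self-similar presentation to a cylinder presentation. Comparing $E=\bigcup_i S_i(E)$ with the Moran expansion of $E$ then forces the sequence of block sizes to repeat, so that $(m-|t_i|)$ is eventually periodic, and the left-justified (consecutive, starting at $0$) arrangement of the $H_j$ forces the first block $I$ to be dominated by the period $J$, which is exactly $I\preceq J$.

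The hardest step is this necessity direction, specifically ruling out \emph{exotic} self-similar presentations: a priori the maps $S_i$ need not respect the Moran cylinder structure, and one must prove that any system generating $E$ can be conjugated to one built from the natural cylinder maps before any periodicity can be read off. Pinning down why the inequality $I\preceq J$—rather than mere eventual periodicity—is exactly the borderline is the delicate combinatorial core, and I expect it to rest on the identity $\ell_k=\sum_{i\ge1}(m-|t_{k+i}|)q^{-i}$ together with the requirement that $E$ be similar to the attractor determined by its own period block.
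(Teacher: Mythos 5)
First, a point of comparison: the paper does not prove this lemma at all---it is quoted verbatim from \cite[Theorem 3.2]{DLD}---so your argument has to stand entirely on its own, and as written it stops, in both directions, exactly where the real work begins. Take sufficiency. Up to a translation, $E=\Gamma_{q,m}\cap(\Gamma_{q,m}+t)$ is the homogeneous Cantor set $C((a_i)):=\set{\sum_{i\ge1}x_iq^{-i}:0\le x_i\le a_i}$ with $a_i:=m-\abs{t_i}$, and your decomposition $E=\bigcup_u f_u(V)$ with $V=E^{(p)}$ self-similar proves nothing by itself: a finite union of scaled copies of a self-similar set need not be the attractor of any iterated function system, and the ``closing up'' that you defer to verification is the entire content of this direction. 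The missing mechanism is an exact digit-set covering. With $D_I:=\set{\sum_{i=1}^p x_iq^{-i}:0\le x_i\le a_i}$ and $D_J$ the analogous set built from the period block $(b_i)$, one needs a finite set $S$ with $S+D_I=D_J$ \emph{exactly}; then $Y:=C(J^{\infty})$ satisfies $Y=\bigcup_{s\in S}(s+X)$ where $X:=C(IJ^{\infty})$, and substituting this into $X=D_I+q^{-p}Y$ yields the genuine self-similarity equation $X=\bigcup_{u\in D_I,\,s\in S}\bigl(q^{-p}X+u+q^{-p}s\bigr)$. Such an $S$ exists (take $S=\set{\sum_{i=1}^p s_iq^{-i}:0\le s_i\le b_i-a_i}$) precisely because of the \emph{coordinatewise} inequalities $a_i\le b_i$, together with $q>m+1$, which guarantees that distinct digit strings with digits at most $m$ represent distinct reals. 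This also exposes an issue your sketch never confronts: $\preceq$ in the statement must be the entrywise order used in \cite{DLD}, not the lexicographic order defined in Section \ref{s3}; under the lexicographic reading the covering step breaks down and the equivalence itself becomes false (for $m=2$ and $q$ large, $(a_i)=02(10)^{\infty}$ has $02\prec 10$ lexicographically, yet the corresponding set is not self-similar by the entrywise criterion).

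For necessity---the genuinely hard half of \cite[Theorem 3.2]{DLD}---your proposal is a restatement of the goal rather than an argument. Showing that an \emph{arbitrary} IFS of similarities generating $E$ must have ratios that are powers of $q^{-1}$ and maps conjugate to cylinder maps is exactly the rigidity problem for generating IFSs of Cantor sets; nothing you list (the gap structure, the identity $\ell_k=\sum_{i\ge1}(m-\abs{t_{k+i}})q^{-i}$) is shown to imply it, and you yourself flag it as the step you cannot do. That step is the substance of the cited theorem and cannot be absorbed into bookkeeping. So the proposal is a sensible outline with both decisive steps missing---one a concrete computation you could complete along the lines above, the other a theorem in its own right---whereas the paper's own ``proof'' is, legitimately, a citation of \cite[Theorem 3.2]{DLD}.
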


\begin{proof}[Proof of Theorem \ref{t16}]
(i) Fix $d\in [c_1,c_2]$ arbitrarily, and consider a corresponding sequence $(a_j)$ in the proof of Theorem \ref{t16} (i).
For any fixed positive even integer $k$,
if we replace $(a_j)$ by the periodic sequence
\begin{equation*}
((1(-1))^{a_1}0^{a_2}\cdots(1(-1))^{a_{k-1}}0^{a_{k}})^{\infty},
\end{equation*}
then the corresponding numbers $t_k$ belong to $\ss_{m,m}(q)$ by Lemma \ref{l76}, and
\begin{equation*}
\dim_H(\Gamma_{q,m} \cap (\Gamma_{q,m}+t_k))\to
\dim_H(\Gamma_{q,m} \cap (\Gamma_{q,m}+t))
\qtq{as}k\to\infty
\end{equation*}
by Lemma \ref{l44}.
\medskip

(ii) Since  $\hh_{m,m}(q)\subset\dd_{m,m}(q)$, this follows from Theorem \ref{t16} (ii).
\end{proof}
\medskip

\emph{Acknowledgement.}
The authors thank Wenxia Li,  Zhiqiang Wang and the anonymous referees for their help and comments to improve the presentation of our results.

\end{document}